\begin{document}

\newtheorem{thm}{Theorem}[section]
\newtheorem{cor}{Corollary}[section]
\newtheorem{lem}{Lemma}[section]
\newtheorem{prop}{Proposition}[section]
\newtheorem{defn}{Definition}[section]
\newtheorem{rk}{Remark}[section]
\newtheorem{nota}{Notation}[section]
\newtheorem{Ex}{Example}[section]
\def\nm{\noalign{\medskip}}

\numberwithin{equation}{section}

\newcommand{\ds}{\displaystyle}
\newcommand{\pf}{\medskip \noindent {\sl Proof}. ~ }
\newcommand{\p}{\partial}
\renewcommand{\a}{\alpha}
\newcommand{\z}{\zeta}
\newcommand{\pd}[2]{\frac {\p #1}{\p #2}}
\newcommand{\norm}[1]{\left\| #1 \right \|}
\newcommand{\dbar}{\overline \p}
\newcommand{\eqnref}[1]{(\ref {#1})}
\newcommand{\na}{\nabla}
\newcommand{\Om}{\Omega}
\newcommand{\ep}{\epsilon}
\newcommand{\tmu}{\widetilde \epsilon}
\newcommand{\vep}{\varepsilon}
\newcommand{\tlambda}{\widetilde \lambda}
\newcommand{\tnu}{\widetilde \nu}
\newcommand{\vp}{\varphi}
\newcommand{\RR}{\mathbb{R}}
\newcommand{\CC}{\mathbb{C}}
\newcommand{\NN}{\mathbb{N}}
\renewcommand{\div}{\mbox{div}~}
\newcommand{\bu}{{\bf u}}
\newcommand{\la}{\langle}
\newcommand{\ra}{\rangle}
\newcommand{\Scal}{\mathcal{S}}
\newcommand{\Lcal}{\mathcal{L}}
\newcommand{\Kcal}{\mathcal{K}}
\newcommand{\Dcal}{\mathcal{D}}
\newcommand{\tScal}{\widetilde{\mathcal{S}}}
\newcommand{\tKcal}{\widetilde{\mathcal{K}}}
\newcommand{\Pcal}{\mathcal{P}}
\newcommand{\Qcal}{\mathcal{Q}}
\newcommand{\id}{\mbox{Id}}
\newcommand{\stint}{\int_{-T}^T{\int_0^1}}

\newcommand{\be}{\begin{equation}}
\newcommand{\ee}{\end{equation}}

\newcommand{\rd}{{\mathbb R^d}}
\newcommand{\rr}{{\mathbb R}}
\newcommand{\alert}[1]{\fbox{#1}}
\newcommand{\eqd}{\sim}
\def\R{{\mathbb R}}
\def\N{{\mathbb N}}
\def\Q{{\mathbb Q}}
\def\C{{\mathbb C}}
\def\ZZ{{\mathbb Z}}
\def\l{{\langle}}
\def\r{\rangle}
\def\t{\tau}
\def\k{\kappa}
\def\a{\alpha}
\def\la{\lambda}
\def\De{\Delta}
\def\de{\delta}
\def\ga{\gamma}
\def\Ga{\Gamma}
\def\ep{\varepsilon}
\def\eps{\varepsilon}
\def\si{\sigma}
\def\Re {{\rm Re}\,}
\def\Im {{\rm Im}\,}
\def\E{{\mathbb E}}
\def\P{{\mathbb P}}
\def\Z{{\mathbb Z}}
\def\D{{\mathbb D}}
\def\p{\partial}
\newcommand{\ceil}[1]{\lceil{#1}\rceil}

\title{Optimal resource control in reaction diffusion advection population model}

\author{Lianzhang Bao \thanks{School of Mathematics, Jilin University, Changchun, Jilin 130012, CHINA; School of Mathematical Science, Zhejiang University, Hangzhou, Zhejiang 310027, CHINA(lzbao@jlu.edu.cn)}
\and Huilai Li \thanks{School of Mathematics, Jilin University, Changchun, Jilin 130012, CHINA (lihuilai@jlu.edu.cn)}
\and Haojian Liang \thanks{School of Mathematics, Jilin University, Changchun, Jilin 130012, CHINA (lianghj17@jlu.edu.cn)}
\and Guangliang Zhao \thanks{GE Global Research, 1 Research Circle, Nishayuna, NY 12309, USA (glzhao@ge.com)}}

\date{}

\maketitle

\begin{abstract}
This paper is to investigate the control problem of maximizing the net benefit of a single species while the cost of the resource allocation is minimized in a population model which can be described by a reaction diffusion advection equation of logistic type with spatial-temporal resource control coefficient. The existence of an optimal
control is established and the uniqueness and characterization of the optimal control are investigated. Numerical simulation illustrate several cases with Dirichlet
and Neumann boundary conditions.
\end{abstract}

\textbf{Key words.} Optimal Control, Reaction-Diffusion-Advection, Population Models.

\pagestyle{myheadings}
\thispagestyle{plain}
{}

\section{Introduction}
In the current paper, we investigated the optimal resource control problem for the following nonlinear diffusion-reaction-advection equation:
\begin{eqnarray}\label{MasterEqu}
\begin{cases}
 u_t - \nabla\cdot [\mu\nabla u - u\overrightarrow{h}] = u[m - f(x,t,u)],\quad Q_T,
 \\
 \mu \frac{\partial u}{\partial \nu} - u\overrightarrow{h}\cdot \nu = 0,\quad S_T,
 \\
 u(\cdot,0) = u_0 \geq 0,\quad \Omega,
 \end{cases}
\end{eqnarray}
where $Q_T = \Omega\times (0,T)$ for $T>0, S_T = \partial\Omega\times (0,T)$. Here $\Omega$ is an open bounded domain in $d-$dimensional space $\mathbb{R}^d, d\in\mathbb{N}$, with smooth boundary $\partial\Omega$,  $m(x,t)$ is the controlled resource and $u(x,t)$ is the density. Moreover, $\mu > 0$ is the diffusion rate, $\overrightarrow{h}: Q_T \rightarrow \mathbb{R}^n$ is the advection direction.
The optimal resource control problem is stated as: Within the control set
\begin{equation}\label{ControlSet}
 U = \{ m\in L^\infty(Q_T): 0\leq m \leq M\},
\end{equation}
how to find $m^*\in U$ such that $J(m^*) = \max_m J(m)$, where the objective functional is defined by
\begin{equation}\label{ObjectF}
 J(m) = \int_{Q_T} [u - (Bm^2)]dxdt,
\end{equation}
subject to the system \eqref{MasterEqu}. The objective functional represents the net benefit, which is the size of the population less the cost of implementing the control. The coefficient $B>0$ is the parameter which balance the two parts of the objective functional.

The first term in $J(m)$, i.e. $\int_{Q_T} udxdt$, represents the total population over time and space, which not only serves as a good measurement for the conservation of a single species, but also plays an important role in preventing the invasion of alien species \cite{Lou2006effect}. The second term $\int_{Q_T} Bm^2dxdt$ is a measurement of the cost of distributing the resource in the habitat. As a whole, $J(m)$ can be regarded as a way of determining the net benefit in the conservation of a single species.

Population movement and its distribution in response to its surrounding environment is an important issue in biology(see\cite{Belgacem1995effect,Cantrell1989diffusive,Cantrell1991effect,Chen2008principal,Cosner2003movement,
Holmes1994partial,Kareiva1987population,Lam2011concentration,Lam2010limiting,Lenhart2007optimal,Kelly2016optimal,
Cantrell2007advection,Cantrell2008approximating,Cantrell2010evolution,Ding2010optimal} and the references therein). Since the population abundance is a good measurement of conservation effort, it is more interesting to know how resource allocation affects population size of the species and it is a very challenging mathematical problem. For instance, given a fixed amount of resource, how can we determine the optimal spatial arrangement of the favorable and unfavorable parts of the habitat for species to survive? The question was first addressed by Cantrell and Cosner \cite{Cantrell1989diffusive,Cantrell1991effect} via the reaction-diffusion equation
\begin{equation*}
 u_t = \lambda \Delta u + m(x)u - u^2, \quad  \Omega,
\end{equation*}
subject to Dirichlet, Robin, or Neumann boundary conditions, where $u(x,t)$ is the density of the species at location $x$ and time $t$, and constant $\lambda$ is the dispersal rate of the species and is assumed to be a positive constant. The coefficient $m(x)$ represents the intrinsic growth rate of the species and it measures the availability of the resource.

Among other things, Cantrell and Cosner \cite{Cantrell1989diffusive} showed that there exists a ``bang-bang'' type optimal spatial arrangement of the favorable and unfavorable parts of the habitat for species to survive, i.e., the corresponding optimal control function $m(x)$ must be a step function in $\Omega$. When $\Omega$ is an interval, Cantrell and Cosner \cite{Cantrell1991effect} showed that if the resource is so arranged that $m(x)$ is equal to some positive constant in one subinterval and is equal to some negative constant otherwise, then the optimal arrangement occurs when the subinterval with $m(x)$ positive is one of the two ends of the interval. It is further shown in \cite{Lou2006minimization} that any optimal control function $m$ must be of ``bang-bang'' type and when the domain $\Omega$ is an interval, there are exactly two optimal controls, for which the control $m(x)$ is positive at one end of the habitat provides the best opportunity for the species to survive. For high-dimensional habitats, very little is known about the exact shape of the optimal control.

Assume that the species can move up along the gradient of the density. In the field of ecology, organisms can often sense and respond to local environmental cues by moving towards favourable habitats, and these movement usually depend upon a combination of local biotic and abiotic factors such as stream, climate, food, chemical substance and predators. There are many examples involves advection can be found in the field of mathematical biology (see\cite{CANTRELL201871,Chen2017Dynamics,Gu2015Long,Wu2018Biased,ZHOU2018356} and the references therein).

The reaction may result in movement with two features: directed advection and random diffusion \cite{Murray2003mathematical,Okubo2011diffusion}. It is commonly believed that the population will move along the direction of increasing resources. With this regard, Belgacem and Cosner \cite{Belgacem1995effect} investigated an reaction-diffusion-advection model in which the advection term is the gradient of the resource function. They investigated steady states solutions of the reaction-diffusion equations with linear and nonlinear logistic growth terms:
\begin{equation*}
 u_t = \nabla\cdot[D\nabla u -\alpha u\nabla m(x)] + m(x)u,\quad \Omega\times (0,\infty)
\end{equation*}
and
\begin{equation*}
 u_t = \nabla\cdot[D\nabla u -\alpha u\nabla m(x)] + m(x)u - c u^2,\quad \Omega\times (0,\infty)
\end{equation*}
together with Dirichlet or Neumann boundary condition. They studied the benefit to the population\cite{Belgacem1995effect}, meaning the persistence of the population in the long run or the existence of a unique globally attracting positive steady state solution. It was found in \cite{Belgacem1995effect} that the directed movement towards better resources could be beneficial to the population, while in the Dirichlet boundary condition, the movement towards better resources can be harmful if more favorable patches are closer to the boundary. In a further investigation \cite{Cosner2003movement}, Cosner et al. studied the logistic reaction-diffusion models with the advection along the environment gradient with Neumann boundary condition, and they found that under the Neumann conditions, the movement along the resource gradient may not always be beneficial to the population. Indeed, it turns out that the convexity of the domains plays a major role in this situation. If the domain is not convex, moving up along the resource gradient could be harmful to the population.

Optimal control techniques were also used in other related work such as \cite{Finotti2012optimal,Kelly2016optimal,Ding2010optimal} to explore how different conditions such as limited resources, growth coefficient, advection movement, and harvesting can be optimized to be ``beneficial'' for populations. In the elliptic case, Ding et al. studied in \cite{Ding2010optimal} the effects of resource allocation on population size of the species. In their work, they investigated the maximizing the total population with the minimum cost for the resource of fixed amount.

In a similar framework as in \cite{Ding2010optimal}, but different direction, Finotti et al. \cite{Finotti2012optimal} investigated Equation \eqref{MasterEqu} with
the control on the advection direction $\overrightarrow{h}$, and they seek for the optimal advection direction $\overrightarrow{h}(x,t)$ that maximizes the total population while minimizing the ``cost'' due to movement.

In the current paper, we focus on the work using optimal control techniques to investigate Equation \eqref{MasterEqu} with the resource control $m(x,t)$ with simplified $f(x,t,u) = u$ which can be extended to a more general function of logistic type. The main results of this paper can be stated as follows:

\begin{thm}(Existence of the optimal control)\label{MainThm1}
Assume that $0<T<\infty,\vec{h}\in L^\infty(Q_T),u_0\in L^\infty(\Omega)\bigcap{H^1(\Omega)}$ and $u_0$ is non-negative. There exists an optimal control $m\in U$ maximizing the objective functional $J(m)$.
\end{thm}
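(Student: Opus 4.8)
The plan is to use the direct method of the calculus of variations: take a maximizing sequence $\{m_k\}\subset U$ with $J(m_k)\to\sup_{m\in U}J(m)$, extract a weakly convergent subsequence, show the associated states converge strongly enough to pass to the limit, and verify the limit is admissible and optimal. First I would check that the problem is well posed: for each $m\in U$ the system \eqref{MasterEqu} has a unique nonnegative weak solution $u=u(m)$ in a suitable space (say $L^2(0,T;H^1(\Omega))\cap C([0,T];L^2(\Omega))$ with $u_t\in L^2(0,T;(H^1(\Omega))')$), and—crucially—$u$ obeys bounds depending only on the data $M$, $\vec h$, $u_0$, $T$, $\mu$, not on the particular $m$. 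The upper bound $0\le u\le C$ comes from a comparison/maximum principle argument: since $m\le M$ and $f(x,t,u)=u$, any solution is a subsolution of the problem with reaction $u[M-u]$, so $u$ is bounded above by the (bounded) solution of that problem; nonnegativity follows from $u_0\ge0$ and the structure of the equation. Testing the equation with $u$ then yields the uniform energy estimate $\|u\|_{L^2(0,T;H^1)}+\|u\|_{L^\infty(0,T;L^2)}\le C$, and from the equation itself $\|u_t\|_{L^2(0,T;(H^1)')}\le C$, again uniformly in $m$. I would state these as a preliminary lemma (or cite it if it appears earlier in the paper's setup).

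Next, given a maximizing sequence $\{m_k\}$, since $U$ is bounded in $L^\infty(Q_T)$ and convex and closed, it is weak-$*$ sequentially compact, so after passing to a subsequence $m_k\rightharpoonup m^*$ weak-$*$ in $L^\infty(Q_T)$, and $m^*\in U$ because $U$ is weak-$*$ closed (the pointwise constraints $0\le m\le M$ pass to the limit). Let $u_k=u(m_k)$. By the uniform estimates above and the Aubin–Lions–Simon lemma, $\{u_k\}$ is precompact in $L^2(Q_T)$; pass to a further subsequence so that $u_k\to u^*$ strongly in $L^2(Q_T)$ and a.e., $u_k\rightharpoonup u^*$ weakly in $L^2(0,T;H^1(\Omega))$, and $(u_k)_t\rightharpoonup (u^*)_t$ weakly in $L^2(0,T;(H^1)')$. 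The main obstacle is to pass to the limit in the weak formulation, specifically in the reaction term $u_k[m_k-u_k]$, which contains the product $m_k u_k$ of a weak-$*$ convergent factor and a weakly convergent factor; this is handled precisely because $u_k\to u^*$ converges strongly in $L^2(Q_T)$, so $m_k u_k\rightharpoonup m^* u^*$ in, say, $L^2(Q_T)$ (weak-$*$ $\times$ strong), while $u_k^2\to (u^*)^2$ strongly in $L^1$ (or $L^2$, using the uniform $L^\infty$ bound) by a.e. convergence and dominated convergence. The linear flux term $\mu\nabla u_k-u_k\vec h$ passes to the limit by weak $H^1$-convergence of $u_k$ together with strong $L^2$-convergence against the fixed $\vec h\in L^\infty$; the initial condition survives because $u_k(\cdot,0)=u_0$ is fixed and the trace at $t=0$ is continuous on the relevant space. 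Hence $u^*=u(m^*)$ is the state corresponding to $m^*$.

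Finally, I would conclude optimality. Writing $J(m_k)=\int_{Q_T}u_k\,dx\,dt-B\int_{Q_T}m_k^2\,dx\,dt$, the first term converges to $\int_{Q_T}u^*\,dx\,dt$ by the strong $L^2(Q_T)$ (hence $L^1$) convergence of $u_k$, while the map $m\mapsto\int_{Q_T}m^2$ is convex and strongly (hence weakly-$*$, on the bounded set $U$) lower semicontinuous, so $\liminf_k\int_{Q_T}m_k^2\ge\int_{Q_T}(m^*)^2$, giving $\limsup_k J(m_k)\le\int_{Q_T}u^*-B\int_{Q_T}(m^*)^2=J(m^*)$. Since $\{m_k\}$ is maximizing, $J(m^*)\ge\sup_{m}J(m)$, and as $m^*\in U$ the reverse inequality is trivial, so $m^*$ is an optimal control. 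I expect the well-posedness/uniform-bounds lemma and the weak-$*$ $\times$ strong argument for the $m_k u_k$ term to be the only nontrivial points; everything else is routine functional analysis.
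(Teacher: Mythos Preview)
Your proposal is correct and follows essentially the same route as the paper: a maximizing sequence, uniform a priori bounds on the states from the preliminary lemmas, Aubin--Lions compactness to get strong $L^2(Q_T)$ convergence of $u_k$, the weak$\times$strong argument to pass to the limit in the product $m_k u_k$, and weak lower semicontinuity of $m\mapsto\int_{Q_T} m^2$ to close. The only cosmetic difference is that you extract $m_k\rightharpoonup m^*$ weak-$*$ in $L^\infty(Q_T)$ whereas the paper works with weak convergence in $L^2(Q_T)$; on the bounded set $U$ these are interchangeable for the argument.
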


\begin{thm}(The characterization of the optimal control)\label{MainThm2}
Given an optimal control $m^*$ and corresponding state $u^*$, there exists a solution $p$ in $L^2((0,T),H^1(\Omega))$ which satisfies $p_t\in L_2{((0,T),H^1(\Omega)^*)}$ and
\begin{equation}\label{JointEqu}
\begin{cases}
-p_t-\mu\Delta p-\vec{h}\cdot\nabla{p}-[m^*- 2u^*]p=1,&\mbox{in}\quad Q_T \\
\frac{\partial p}{\partial \nu}=0,&\mbox{in}\quad\partial\Omega\times (0,T),\\
p(\cdot,T)=0,&\mbox{in}\quad\Omega.
\end{cases}
\end{equation}
Furthermore, $m^*$ is characterized by
\begin{equation}
m^* =\max\{\min\{M,\frac{u^*p}{2B}\},0\}.
\end{equation}
\end{thm}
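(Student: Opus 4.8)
The plan is to derive the optimality system by a standard Lagrangian/sensitivity argument, proceeding in three stages: (i) establish differentiability of the control-to-state map $m \mapsto u$, (ii) introduce the adjoint (costate) equation \eqref{JointEqu} and verify it is well-posed in the stated function spaces, and (iii) compute the directional derivative of $J$ and extract the pointwise characterization of $m^*$ from the variational inequality. For step (i), I would fix an optimal control $m^*$ with state $u^*$, take an admissible variation $m^\ep = m^* + \ep\, \ell$ for $\ell$ such that $m^\ep \in U$ for small $\ep > 0$ (so $\ell \geq 0$ where $m^* = 0$ and $\ell \leq 0$ where $m^* = M$), and let $u^\ep$ be the corresponding state. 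Writing $\psi = \lim_{\ep \to 0^+}(u^\ep - u^*)/\ep$, one formally obtains the linearized (sensitivity) equation
\[
\begin{cases}
\psi_t - \nabla\cdot[\mu\nabla\psi - \psi\vec h] = [m^* - 2u^*]\psi + \ell\, u^*, & Q_T,\\
\mu\frac{\partial \psi}{\partial\nu} - \psi\,\vec h\cdot\nu = 0, & S_T,\\
\psi(\cdot,0) = 0, & \Omega.
\end{cases}
\]
The rigorous justification that the difference quotients converge (weakly in $L^2((0,T),H^1(\Omega))$, strongly in $L^2(Q_T)$) to the unique weak solution $\psi$ of this linear parabolic problem uses a priori energy estimates uniform in $\ep$, together with the uniform $L^\infty$ bounds on $u^\ep$ established in the existence theory referenced for Theorem \ref{MainThm1}; this is where the hypotheses $\vec h \in L^\infty(Q_T)$ and $u_0 \in L^\infty(\Omega)\cap H^1(\Omega)$ are consumed.

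For step (ii), I would define $p$ as the weak solution of the backward adjoint problem \eqref{JointEqu}. After the time reversal $t \mapsto T - t$ this becomes a forward uniformly parabolic equation with bounded coefficients (using $m^*, u^* \in L^\infty(Q_T)$) and zero initial data, so existence and uniqueness of $p \in L^2((0,T),H^1(\Omega))$ with $p_t \in L^2((0,T),H^1(\Omega)^*)$ — hence $p \in C([0,T],L^2(\Omega))$ — follow from the standard Lions–Magenes theory for linear parabolic equations; I would cite the same references used for the state equation. Note the adjoint carries the convection term $\vec h\cdot\nabla p$ rather than $\nabla\cdot(\vec h\,\cdot)$, which is exactly the formal adjoint of the state operator, and the right-hand side $1$ comes from differentiating the first term $\int_{Q_T} u\,dx\,dt$ of $J$.

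The final step combines these: differentiating $J(m^\ep)$ at $\ep = 0^+$ gives
\[
\frac{dJ}{d\ep}\Big|_{\ep=0^+} = \int_{Q_T} \big(\psi - 2Bm^*\ell\big)\,dx\,dt \leq 0
\]
since $m^*$ is a maximizer over the convex set $U$. The key manipulation is to use $\psi$ as a test function in the weak form of \eqref{JointEqu} and, symmetrically, $p$ as a test function in the weak form of the sensitivity equation; integrating by parts in space and time (the boundary terms cancel by the matched boundary conditions, and the temporal boundary terms vanish because $\psi(\cdot,0)=0$ and $p(\cdot,T)=0$) yields the duality identity $\int_{Q_T}\psi\,dx\,dt = \int_{Q_T} \ell\, u^* p\,dx\,dt$. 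Substituting, the variational inequality becomes $\int_{Q_T} \ell\,(u^*p - 2Bm^*)\,dx\,dt \leq 0$ for all admissible $\ell$, and a pointwise argument on the sets $\{m^* = 0\}$, $\{0 < m^* < M\}$, $\{m^* = M\}$ gives $m^* = \max\{\min\{M, u^*p/(2B)\}, 0\}$. I expect the main obstacle to be the rigorous passage to the limit in the difference quotients in step (i) — in particular obtaining $\ep$-uniform estimates strong enough to pass to the limit in the nonlinear term $u^\ep(u^\ep - u^*)/\ep$ and identify the limit as $2u^*\psi$ — rather than the algebra of the duality identity, which is routine once the function spaces are in place.
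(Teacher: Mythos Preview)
Your proposal is correct and follows essentially the same route as the paper: the paper packages your step (i) as a separate sensitivity lemma (Lemma~\ref{MainLem1}) with exactly the linearized equation you wrote, obtains the adjoint by the time reversal $t\mapsto T-t$ and standard linear parabolic theory (Galerkin, citing Evans), and then derives the variational inequality $\int_{Q_T} l(u^*p-2Bm^*)\,dx\,dt\le 0$ via the same duality pairing you describe. Your pointwise case analysis on $\{m^*=0\}$, $\{0<m^*<M\}$, $\{m^*=M\}$ is in fact cleaner than the paper's slightly informal extraction of the characterization.
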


\begin{thm}(The uniqueness of the optimal control)\label{MainThm3}
There exist two positive number $T_0$ and $B_0$ such that if $0<T<T_0$ and $B>B_0$, then there is a unique solution of the optimality system.
\end{thm}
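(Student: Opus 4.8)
The plan is to show that the optimality system --- the state equation \eqref{MasterEqu} with $f(x,t,u)=u$, the adjoint system \eqref{JointEqu}, and the pointwise relation $m=\max\{\min\{M,u p/(2B)\},0\}$ --- has at most one solution triple $(u,p,m)$ when $T$ is small and $B$ is large, via an energy estimate on the difference of two candidate solutions. So I would take $(u_1,p_1,m_1)$ and $(u_2,p_2,m_2)$ both solving the system, set $\bar u=u_1-u_2$, $\bar p=p_1-p_2$, $\bar m=m_1-m_2$, and estimate these in $L^2(\Omega)$ uniformly in time (below, $\|\cdot\|$ denotes the $L^2(\Omega)$ norm at a fixed time).

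First I would record the a priori bounds that make the nonlinear couplings Lipschitz. Since $0\le m_i\le M$ and the reaction $u(m-u)$ is logistic, comparison with the scalar ODE $v'=v(M-v)$ gives $0\le u_i\le C_u:=\max\{\|u_0\|_{L^\infty},M\}$ on $Q_T$, with $C_u$ independent of $T$ and $B$. Reversing time in \eqref{JointEqu} turns it into a forward linear parabolic problem with homogeneous Neumann data, zeroth-order coefficient bounded by $M+2C_u$, and right-hand side $1$, so standard parabolic estimates give $\|p_i\|_{L^\infty(Q_T)}\le C_p(T)$ with $C_p$ nondecreasing and finite for finite $T$. Since the projection onto $[0,M]$ is $1$-Lipschitz, these bounds yield the control estimate
\[
\|\bar m(\cdot,t)\|\le \frac{1}{2B}\Big(C_u\,\|\bar p(\cdot,t)\|+C_p(T)\,\|\bar u(\cdot,t)\|\Big)\qquad\text{for a.e. }t .
\]

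Next come the two energy identities. Subtracting the state equations, $\bar u$ solves a linear advection-diffusion equation with the same no-flux boundary condition and source $m_1\bar u+\bar m\,u_2-(u_1+u_2)\bar u$; testing with $\bar u$, absorbing the advection term $\int_\Omega \bar u\,\vec h\cdot\nabla\bar u$ into $\tfrac{\mu}{2}\|\nabla\bar u\|^2$ by Young's inequality, discarding the favorably signed $-\int_\Omega(u_1+u_2)\bar u^2\le 0$, and applying Cauchy--Schwarz to $\int_\Omega\bar m\,u_2\bar u$, I obtain $\frac{d}{dt}\|\bar u\|^2\le C(\|\bar u\|^2+\|\bar m\|^2)$. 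Subtracting the adjoint equations and testing with $\bar p$ --- here the terminal condition $\bar p(\cdot,T)=0$ makes $-\int_\Omega\bar p_t\bar p$ cooperative --- and estimating the source $[m_1-2u_1]\bar p+(\bar m-2\bar u)p_2$ the same way gives $-\frac{d}{dt}\|\bar p\|^2\le C(\|\bar p\|^2+\|\bar u\|^2+\|\bar m\|^2)$. Substituting the control estimate and writing $\phi(t)=\|\bar u(\cdot,t)\|^2+\|\bar p(\cdot,t)\|^2$, these become $\frac{d}{dt}\|\bar u\|^2\le K\phi$ and $-\frac{d}{dt}\|\bar p\|^2\le K\phi$ with $K=K\big(\mu,\|\vec h\|_{L^\infty},M,C_u,C_p(T),B^{-1}\big)$. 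Since $\bar u(\cdot,0)=0$ and $\bar p(\cdot,T)=0$, integrating in time gives $\|\bar u(\cdot,t)\|^2\le K\int_0^t\phi$ and $\|\bar p(\cdot,t)\|^2\le K\int_t^T\phi$, hence $\phi(t)\le K\int_0^T\phi$ for every $t$, and integrating once more $\int_0^T\phi\le KT\int_0^T\phi$. When $KT<1$ this forces $\int_0^T\phi=0$, so $\bar u\equiv\bar p\equiv 0$, whence $\bar m\equiv 0$; and $KT<1$ holds once $B>B_0$ (taming the $B$-dependent part of $K$, proportional to something like $C_p(T)^2(1+B^{-2})$) and then $T<T_0$ (making the product small while keeping $C_p(T)$ bounded).

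The main obstacle is precisely this coupling of a forward-in-time state equation with a backward-in-time adjoint: one cannot close a Gronwall argument in a single time direction, and the device $\phi(t)\le K\int_0^T\phi$ is what both circumvents this and forces the genuine smallness of $T$ (hence the threshold on $B$). A secondary technical point is ensuring the adjoint solution has enough regularity --- in particular $p\in L^\infty(Q_T)$ --- to legitimize the pointwise control estimate and the integrations by parts; this is where the parabolic bound on $p$ enters. Obtaining sharp $T_0,B_0$ would require carefully tracking the advection and reaction contributions to $K$ together with the growth of $C_p(T)$, but the crude bounds above already give the stated qualitative result.
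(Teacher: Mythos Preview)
Your proof is correct and takes a genuinely different route from the paper's. The paper introduces exponential weights, writing $u_i=e^{\lambda t}w_i$ and $p_i=e^{-\lambda t}z_i$; the differences $W=w_1-w_2$ and $Z=z_1-z_2$ then satisfy energy inequalities carrying an extra damping $-\lambda$, and one closes by choosing $\lambda$ large enough that (schematically) $C_1+C_1/B+C_4-\lambda<0$ and $C_1/B+C_2+C_3e^{4\lambda T}-\lambda<0$ --- this is where small $T$ and large $B$ enter. You instead forgo the weights and exploit the bidirectional structure directly: integrating the forward estimate from $0$ and the backward one from $T$ gives $\phi(t)\le K\int_0^T\phi$, and one more integration forces $\int_0^T\phi=0$ once $KT<1$. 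Your argument is more elementary (no auxiliary parameter to balance) and makes transparent that the real obstruction is the product $KT$; in particular it shows that a lower bound on $B$ is not truly essential, large $B$ only marginally improving $T_0$. The paper's weighted device is the classical one for forward--backward optimality systems and adapts more readily if one later wants, say, exponential-in-time contraction rates.

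One small caveat: your justification of the uniform bound $0\le u_i\le C_u$ via comparison with the ODE $v'=v(M-v)$ is not quite right as written, because a constant is not automatically a supersolution of \eqref{MasterEqu} when $\nabla\cdot\vec h\neq 0$, and the paper assumes only $\vec h\in L^\infty(Q_T)$. The bound is nonetheless true --- it is Lemma~\ref{EstThm}, obtained by parabolic iteration rather than by a barrier --- so the remainder of your argument is unaffected.
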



The rest of the paper is organized in the following way: In section 2, we present some preliminary lemmas to be used in the proofs of the main results. We prove the main results of the paper in section 3. The numerical simulation results are illustrated in section 4.

\section{Preliminary lemmas}
In this section, we present some results to guarantee the existence and Apriori estimate of a positive solution to Equation \eqref{MasterEqu}. We denote by $H^1(\Omega)$ the usual Sobolev space and its dual space is $H^1(\Omega)^*$. The space $V_2(Q_T)$ is defined for all functions $u\in L^2((0,T),H^1(\Omega)) $ such that its norm
\begin{equation*}
 \|u\|_{V_2(Q_T)} = \{\sup_{0\leq t\leq T} \int_{\Omega} u(x,t)^2 dx + \int_{Q_T} |\nabla u(x,t)|^2dxdt\}^{1/2} < \infty.
\end{equation*}
\begin{defn}
 The function $u\in L^2((0,T), H^1(\Omega))$ with $u_t\in L^2((0,T), H^1(\Omega)^*)$ and $u(x,0) = u_0$ is said to be a weak solution of Equation \eqref{MasterEqu} if and only if for a.e. $t\in (0,T)$
\be
 \int_{\Omega} u_t \theta dx + \int_{\Omega} [\mu \nabla u - \overrightarrow{h}u] \cdot \nabla\theta dx = \int_{\Omega} u[m(x,t) - u]\theta dx, \quad \forall \theta \in H^1(\Omega).
\ee
\end{defn}
In order to prove the existence of the optimal control, we need the following results and the proofs are similar in \cite{Finotti2012optimal}.
\begin{lem}\label{Lowerlem}
 Assume that $\overrightarrow{h}$ and $m$ are in $L^\infty(Q_T)$ and $u_0 \geq 0.$ Then, any solution $u$ of Equation \eqref{MasterEqu} must be non-negative on $Q_T$.
\end{lem}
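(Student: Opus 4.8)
The plan is a standard energy (Stampacchia-type) argument on the negative part of $u$. Write $u^{-}=\max\{-u,0\}\ge 0$, so that $u=u^{+}-u^{-}$ and, for a.e.\ $t\in(0,T)$, $u^{-}(\cdot,t)\in H^{1}(\Omega)$ with $\nabla u^{-}=-\nabla u\,\mathbf{1}_{\{u<0\}}$; I would test the weak formulation with $\theta=-u^{-}(\cdot,t)=\min\{u(\cdot,t),0\}\in H^{1}(\Omega)$. The time term is the duality pairing $\langle u_{t},-u^{-}\rangle$, and by the chain rule for functions in $L^{2}((0,T),H^{1}(\Omega))$ whose derivative lies in $L^{2}((0,T),H^{1}(\Omega)^{*})$ it equals $\tfrac12\frac{d}{dt}\|u^{-}(\cdot,t)\|_{L^{2}(\Omega)}^{2}$; the initial value is $\|u^{-}(\cdot,0)\|_{L^{2}}^{2}=\|u_{0}^{-}\|_{L^{2}}^{2}=0$ since $u_{0}\ge 0$. (To be fully rigorous one replaces $(-u)^{+}$ by a smooth truncation $\phi_{\varepsilon}(u)$, runs the computation, and lets $\varepsilon\to 0$; this and the $H^{1}$-membership are the routine parts, carried out as in the companion references.)

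Next I would evaluate the remaining terms. The diffusion term becomes $-\mu\int_{\Omega}\nabla u\cdot\nabla u^{-}\,dx=\mu\|\nabla u^{-}\|_{L^{2}}^{2}$, using $\nabla u\cdot\nabla u^{-}=-|\nabla u^{-}|^{2}$ a.e.; this is the good dissipative term. The advection term equals $-\int_{\Omega}\vec h\,u^{-}\cdot\nabla u^{-}\,dx$, bounded by $\|\vec h\|_{L^{\infty}}\|u^{-}\|_{L^{2}}\|\nabla u^{-}\|_{L^{2}}\le\tfrac{\mu}{2}\|\nabla u^{-}\|_{L^{2}}^{2}+C\|u^{-}\|_{L^{2}}^{2}$ by Young's inequality, absorbing half of the diffusion. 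The reaction term is $\int_{\Omega}u(m-u)(-u^{-})\,dx=\int_{\Omega}m\,(u^{-})^{2}\,dx+\int_{\Omega}(u^{-})^{3}\,dx\le M\|u^{-}\|_{L^{2}}^{2}+\int_{\Omega}(u^{-})^{3}\,dx$, using $0\le m\le M$. With $y(t):=\|u^{-}(\cdot,t)\|_{L^{2}(\Omega)}^{2}$ this gives
\begin{equation*}
\tfrac12\,y'(t)+\tfrac{\mu}{2}\,\|\nabla u^{-}(\cdot,t)\|_{L^{2}}^{2}\;\le\; C\,y(t)+\int_{\Omega}(u^{-})^{3}\,dx ,\qquad y(0)=0 ,
\end{equation*}
with $C=C(\mu,M,\|\vec h\|_{L^{\infty}})$.

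The only term that does not close the estimate on its own is the superlinear $\int_{\Omega}(u^{-})^{3}\,dx$, and this is where the work lies. If an a priori bound $\|u\|_{L^{\infty}(Q_T)}\le K$ is already available — as it is in this section, following the analogous treatment we are using — then $\int_{\Omega}(u^{-})^{3}\le K\,y(t)$ and Gr\"onwall's inequality yields $y(t)\le e^{C't}y(0)=0$, so $u^{-}\equiv 0$ and $u\ge 0$ a.e.\ on $Q_T$. Alternatively, for a self-contained argument one estimates $\int_{\Omega}(u^{-})^{3}\,dx\le\tfrac{\mu}{2}\|\nabla u^{-}\|_{L^{2}}^{2}+C_{\varepsilon}\,y(t)^{q}$ via the Gagliardo--Nirenberg and Young inequalities (with an exponent $q=q(d)$, valid at least for $d\le 3$), absorbs the gradient term into the dissipation, and is left with $y'\le C(y+y^{q})$, $y(0)=0$, whose right-hand side is locally Lipschitz in $y$; uniqueness for this scalar initial value problem forces $y\equiv 0$. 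Either way one concludes $u\ge 0$ on $Q_T$; equivalently, $\underline u\equiv 0$ is a subsolution of \eqref{MasterEqu} and the estimate above is exactly the comparison $u\ge\underline u$.
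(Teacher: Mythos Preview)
The paper does not give its own proof of this lemma; it simply states that ``the proofs are similar in \cite{Finotti2012optimal}''. Your Stampacchia-type argument---testing the weak formulation with $-u^{-}$ and applying Gr\"onwall---is precisely the standard technique used in that reference, so the overall strategy is the expected one and is correct.

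Two small points are worth tightening. First, the lemma assumes only $m\in L^{\infty}(Q_T)$, not $m\in U$; your bound $m(u^{-})^{2}\le M(u^{-})^{2}$ should therefore read $\|m\|_{L^{\infty}}(u^{-})^{2}$. Second, and more substantively, your first option for dispatching the cubic term $\int_{\Omega}(u^{-})^{3}\,dx$ appeals to the a~priori $L^{\infty}$ bound of Lemma~\ref{EstThm}; but in the logical order of this section Lemma~\ref{Lowerlem} precedes Lemma~\ref{EstThm}, and in the reference the $L^{\infty}$ estimate is typically derived \emph{after} (and sometimes using) nonnegativity, so invoking it here risks circularity. Your second, self-contained route via Gagliardo--Nirenberg and the ODE comparison $y'\le C(y+y^{q})$, $y(0)=0$, is sound and avoids this issue; it is the cleaner choice here, with the honest caveat (which you note) that it is dimension-restricted. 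In low dimensions this closes the argument; for general $d$ the usual fix in the literature is to prove nonnegativity first for bounded approximating solutions (e.g.\ with a truncated nonlinearity) and pass to the limit, which is consistent with how \cite{Finotti2012optimal} sets things up.
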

Which is natural that Equation \eqref{MasterEqu} simulate the population density over $Q_T$, and the density should be non-negative.
\begin{lem}\label{Uplem}
 Assume that $C_m = \sup\limits_{Q_T} |m| < \infty, u_0 \in L^\infty(Q_T)$ and $u_0\geq 0$. Then for each $m\in U$, any solution $u$ of Equation \eqref{MasterEqu} satisfies
\begin{equation*}
 \int_\Omega u(x,t)dx \leq e^{tC_m} \int_{\Omega} u_0(x) dx, \forall t\in [0,T),
\end{equation*}
and
\begin{equation*}
 J(m) \leq \frac{(e^{TC_m} -1)\|u_0\|_{L^\infty}}{C_m},\quad \forall m \in U.
\end{equation*}
\end{lem}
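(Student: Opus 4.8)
The plan is to derive an ODE-type inequality for the total mass $N(t):=\int_\Omega u(x,t)\,dx$ by inserting the constant function $\theta\equiv 1$ into the weak formulation of \eqref{MasterEqu}. This is legitimate: since $\Omega$ is bounded, $1\in H^1(\Omega)$, and since $u\in L^2((0,T),H^1(\Omega))$ with $u_t\in L^2((0,T),H^1(\Omega)^*)$, the map $t\mapsto N(t)=\la u(\cdot,t),1\ra$ is absolutely continuous on $[0,T]$ with $N'(t)=\la u_t(\cdot,t),1\ra$ for a.e.\ $t$. Because $\nabla\theta=0$, the diffusion--advection term $\int_\Omega[\mu\nabla u-\vec h\,u]\cdot\nabla\theta\,dx$ disappears — this is precisely where the no-flux boundary condition of \eqref{MasterEqu} has been absorbed into the weak form — leaving
\[
 N'(t)=\int_\Omega u(x,t)\,[\,m(x,t)-u(x,t)\,]\,dx\qquad\text{for a.e. }t\in(0,T).
\]

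Next I would invoke Lemma \ref{Lowerlem}: since $\vec h,m\in L^\infty(Q_T)$ and $u_0\ge 0$, the solution obeys $u\ge 0$ on $Q_T$. Hence $-u^2\le 0$ pointwise and $u\,m\le |m|\,u\le C_m\,u$, so the right-hand side above is at most $C_m N(t)$. Grönwall's inequality then gives $N(t)\le e^{tC_m}N(0)=e^{tC_m}\int_\Omega u_0\,dx$ for all $t\in[0,T)$, which is the first assertion (when $C_m=0$ one just reads this as $N(t)\le N(0)$).

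For the bound on $J$, drop the non-positive term $-Bm^2$ in \eqref{ObjectF} to obtain $J(m)\le\int_{Q_T}u\,dx\,dt=\int_0^T N(t)\,dt$. Substituting the Grönwall estimate, using $\int_\Omega u_0\,dx\le|\Omega|\,\|u_0\|_{L^\infty(\Omega)}$ (this is where $u_0\in L^\infty$ enters), and computing $\int_0^T e^{tC_m}\,dt=(e^{TC_m}-1)/C_m$, yields the claimed inequality uniformly in $m\in U$.

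The only genuinely delicate point is the first step — verifying that constant test functions are admissible and that the weak formulation indeed permits differentiating $N(t)$ as stated; once that is granted, the rest is a standard energy estimate plus Grönwall, with Lemma \ref{Lowerlem} supplying the sign of $u$ that renders the $-u^2$ term harmless. Everything else is routine.
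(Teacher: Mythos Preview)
Your argument is correct and is exactly the standard one: test the weak formulation against the constant function $1$, use the no-flux condition (already built into the weak form) to kill the flux term, exploit $u\ge 0$ from Lemma~\ref{Lowerlem} to drop $-u^2$ and bound $mu\le C_m u$, then apply Gr\"onwall. The paper does not give its own proof but simply refers to \cite{Finotti2012optimal}, where this same computation appears.

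One minor remark: your last step actually produces
\[
J(m)\ \le\ \frac{e^{TC_m}-1}{C_m}\int_\Omega u_0\,dx\ \le\ \frac{(e^{TC_m}-1)\,|\Omega|\,\|u_0\|_{L^\infty}}{C_m},
\]
with an extra factor $|\Omega|$ compared to the inequality as stated in the lemma. This is not an error on your part; the $|\Omega|$ is simply missing from the statement (it is only used to show $\sup_{m\in U}J(m)<\infty$, so the precise constant is irrelevant).
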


\begin{lem}\label{EstThm}
Let $0<T<\infty,\vec{h}\in{L^\infty(Q_T)}$ and $u_0$ be non-negative,bounded and in $H^1(\Omega)$. Then, for each $m\in U$, there is a unique weak solution $u=u(m)$ of \eqref{MasterEqu}. Moreover, there is a finite constant $C>0$ depending only on $|\Omega|,\mu,M,T,\|u_0\|,\|h\|_{L^\infty},d$
such that
\begin{equation}
\|u(m)\|_{V_2(Q_T)}\leq C,\text{and}\quad 0\leq u(m)\leq C,\quad\forall(x,t)\in{Q_T}.
\end{equation}
\end{lem}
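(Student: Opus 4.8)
The plan is to establish the four assertions -- existence, the $V_2$ bound, the $L^\infty$ bound, and uniqueness -- roughly in that order, treating the logistic term $u(m-u)$ as a lower-order perturbation of a linear divergence-form parabolic operator. For existence I would first regularize, replacing the quadratic $u^2$ by the globally Lipschitz truncation $u\,T_n(u)$ (with $T_n$ the cut-off at height $n$) and solving the resulting semilinear problem by a Faedo--Galerkin scheme: project onto the finite-dimensional subspaces of $H^1(\Omega)$ spanned by eigenfunctions of the Neumann Laplacian, solve the resulting ODE system, and extract a weak limit. Passing to the limit in the Galerkin index uses the uniform $V_2$ estimate below together with Aubin--Lions compactness to upgrade weak $L^2$ convergence to strong $L^2(Q_T)$ convergence, which is what is needed to pass to the limit in the nonlinear reaction term. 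The $L^\infty$ bound derived below is uniform in $n$, so for $n$ larger than that bound the truncation is inactive and the limit solves the original problem \eqref{MasterEqu}.

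The $V_2(Q_T)$ estimate is the routine energy estimate. Taking $\theta = u$ in the weak formulation gives
\begin{equation*}
\frac12\frac{d}{dt}\int_\Omega u^2\,dx + \mu\int_\Omega|\nabla u|^2\,dx = \int_\Omega u\,\vec{h}\cdot\nabla u\,dx + \int_\Omega u^2(m-u)\,dx .
\end{equation*}
By Lemma \ref{Lowerlem} we have $u\ge 0$, so $-\int_\Omega u^3\,dx\le 0$ may be discarded and $\int_\Omega u^2(m-u)\,dx\le M\int_\Omega u^2\,dx$. The advection term is absorbed by Young's inequality, $\int_\Omega u\,\vec h\cdot\nabla u\,dx\le \frac{\mu}{2}\|\nabla u\|_{L^2(\Omega)}^2 + \frac{\|\vec h\|_{L^\infty}^2}{2\mu}\|u\|_{L^2(\Omega)}^2$. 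Gronwall's inequality then controls $\sup_{0\le t\le T}\|u(\cdot,t)\|_{L^2(\Omega)}^2$ by $e^{CT}\|u_0\|_{L^2(\Omega)}^2$, and integrating the differential inequality in $t$ bounds $\int_{Q_T}|\nabla u|^2\,dxdt$; together these give $\|u\|_{V_2(Q_T)}\le C$ with $C$ of the asserted dependence.

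The main obstacle is the pointwise upper bound. The decisive structural fact is that, since $m\le M$ and $u\ge 0$, the reaction satisfies $u(m-u)\le u(M-u)\le M^2/4$, i.e. it is bounded above by a constant independent of $u$; the lower bound $u\ge 0$ is Lemma \ref{Lowerlem}. I would test the weak form with $\theta=(u-k)^+$ for levels $k\ge \|u_0\|_{L^\infty(\Omega)}$, so that the truncation vanishes at $t=0$. On $A_k(t)=\{x:u(x,t)>k\}$ the reaction contributes a term bounded by $\frac{M^2}{4}\int_{A_k}(u-k)^+\,dx$, while the advection flux $-\,u\vec h$, being in divergence form, produces after Young's inequality a term of the form $\frac{\|\vec h\|_{L^\infty}^2}{2\mu}\int_{A_k}(u-k+k)^2\,dx$ whose constant part $k^2|A_k|$ cannot be absorbed by a single application of Gronwall -- this is precisely where the advection makes the bound delicate. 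The remedy is a De Giorgi--Stampacchia iteration over a sequence of levels $k_n\uparrow k_\infty$: combining the energy inequality for $(u-k_n)^+$ with the parabolic Sobolev embedding $\|v\|_{L^{2(d+2)/d}(Q_T)}\le C\|v\|_{V_2(Q_T)}$ and the elementary bound $(k_{n+1}-k_n)^2|A_{k_{n+1}}|\le\int ((u-k_n)^+)^2$ yields a recursive inequality $y_{n+1}\le C\,b^{\,n} y_n^{1+\delta}$ for the level energies $y_n$, with $\delta>0$; choosing $k_\infty$ large enough makes $y_n\to 0$, so $u\le k_\infty=:C$ almost everywhere. (Alternatively one may quote the $L^\infty$ bound for quasilinear parabolic equations with bounded divergence-form drift and bounded reaction from Ladyzhenskaya, Solonnikov and Ural'tseva.) This bound depends only on $|\Omega|,\mu,M,T,\|u_0\|_{L^\infty},\|\vec h\|_{L^\infty},d$, and being uniform in the truncation parameter $n$ it justifies removing the truncation in the existence step.

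Finally, uniqueness follows once the $L^\infty$ bound is in hand. If $u_1,u_2$ are two bounded weak solutions, their difference $w=u_1-u_2$ satisfies, in the weak sense,
\begin{equation*}
w_t-\nabla\cdot[\mu\nabla w-\vec h\,w]=\big[m-(u_1+u_2)\big]w,\qquad w(\cdot,0)=0,
\end{equation*}
a linear equation whose zeroth-order coefficient $m-(u_1+u_2)$ lies in $L^\infty(Q_T)$ thanks to the bound just proved. Testing with $w$, absorbing the advection term by Young's inequality and estimating the reaction term by $\|m-(u_1+u_2)\|_{L^\infty}\|w\|_{L^2(\Omega)}^2$, one obtains $\frac{d}{dt}\|w\|_{L^2(\Omega)}^2\le C\|w\|_{L^2(\Omega)}^2$; since $w(\cdot,0)=0$, Gronwall's inequality forces $w\equiv 0$, which gives uniqueness of the weak solution.
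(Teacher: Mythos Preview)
Your proposal is correct and complete in outline. The paper itself does not give a proof of this lemma: immediately before stating Lemmas~\ref{Lowerlem}--\ref{EstThm} it simply says ``the proofs are similar in \cite{Finotti2012optimal}'' and moves on. So there is no in-paper argument to compare against; your sketch is precisely the kind of argument that citation is standing in for.

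A couple of minor remarks. For the $L^\infty$ bound you are slightly underselling the logistic term: on the level set $\{u>k\}$ with $k\ge M$ the reaction $u(m-u)(u-k)^+$ is already nonpositive, so only the advection needs to be controlled by the De~Giorgi iteration (or by invoking Lady\v{z}enskaja--Solonnikov--Ural'tseva directly, as you note). Also, in your existence step the nonnegativity used in the $V_2$ estimate is Lemma~\ref{Lowerlem}, which is itself only stated in the paper by reference to \cite{Finotti2012optimal}; for a self-contained write-up you would want to include the short maximum-principle argument (test with $u^-$) for the truncated problem as well. None of this affects the validity of your scheme.
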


The above lemmas show the solution of Equation \eqref{MasterEqu} must be bounded and the bounds only depend on the bounds of $m(x,t)$ and $\overrightarrow{h}$ which will help to prove the existence of the optimal control problem.

In order to characterize the optimal control, we need to investigate the relationship about $m$, $u(m)$ and $J(m)$. For the given unique positive solution $u=u(m)$ of \eqref{MasterEqu}, the derivative of the mapping $m\rightarrow u(m)$ is called the sensitivity, and we have the following differentiability results of the mapping $m\rightarrow u(m).$
\begin{lem}\label{MainLem1}
The mapping $m\in U\rightarrow u(m)$ is differentiable in the following sense: for each $m$, $l$ in $U$ such that $m+\varepsilon l\in U$ for all $\varepsilon$ sufficiently small, then there is a uniform constant $C>0$ such that $\varphi^\varepsilon=\frac{u(m+\varepsilon l)-u(m)}{\varepsilon}$ satisfies
\begin{equation*}
\|\varphi^\varepsilon\|_{V_2(Q_T)},\|\varphi^\varepsilon\|_{L^\infty(Q_T)}\leq C.
\end{equation*}
Moreover, there exists $\varphi=\varphi(m,l)\in L^2((0,T),H^1(\Omega))$, such that
\begin{equation*}
\varphi^\varepsilon\rightharpoonup\varphi\quad\text{weakly in}\quad L^2((0,T),H^1(\Omega))\quad\text{as}\quad\varepsilon\rightarrow 0,
\end{equation*}
and the sensitivity $\varphi$ satisfies
\begin{equation}\label{MasterEqu2}
\begin{cases}
\varphi_t-\nabla\cdot[\mu\nabla{\varphi}-\varphi\vec{h}]-\varphi[m- 2u]&=-ul,\\
\mu\frac{\partial \varphi}{\partial \nu}-\varphi\vec{h}\cdot \nu&=0,\\
u(\cdot,0)&=0.
\end{cases}
\end{equation}
\end{lem}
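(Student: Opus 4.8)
The plan is to proceed in three stages: first derive uniform bounds on the difference quotients $\varphi^\varepsilon$, then extract a weak limit, and finally identify the limiting equation \eqref{MasterEqu2}. Write $u^\varepsilon = u(m+\varepsilon l)$ and $u = u(m)$, both of which exist, are non-negative, and are bounded uniformly by Lemma \ref{EstThm} (with a constant $C$ independent of $\varepsilon$, since $m+\varepsilon l$ ranges in $U$). Subtracting the weak formulations for $u^\varepsilon$ and $u$ and dividing by $\varepsilon$, one sees that $\varphi^\varepsilon$ satisfies, for a.e.\ $t$ and all $\theta\in H^1(\Omega)$,
\begin{equation*}
\int_\Omega \varphi^\varepsilon_t\,\theta\,dx + \int_\Omega [\mu\nabla\varphi^\varepsilon - \vec h\,\varphi^\varepsilon]\cdot\nabla\theta\,dx = \int_\Omega \big( m\varphi^\varepsilon - (u^\varepsilon+u)\varphi^\varepsilon + l\,u^\varepsilon\big)\theta\,dx,
\end{equation*}
with $\varphi^\varepsilon(\cdot,0)=0$; here I used the algebraic identity $u^\varepsilon[m+\varepsilon l - u^\varepsilon] - u[m-u] = \varepsilon l u^\varepsilon + m\varphi^\varepsilon\varepsilon - (u^\varepsilon+u)\varphi^\varepsilon\varepsilon$, so that after division by $\varepsilon$ the source term $l u^\varepsilon$ appears and the quadratic term linearizes into $-(u^\varepsilon+u)\varphi^\varepsilon$. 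Note this is a \emph{linear} parabolic equation for $\varphi^\varepsilon$ with coefficients bounded uniformly in $\varepsilon$ (the zeroth-order coefficient $m-(u^\varepsilon+u)$ is bounded by $M+2C$, and $\vec h\in L^\infty$), and with right-hand side $l u^\varepsilon$ bounded in $L^\infty(Q_T)$.

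Next I would derive the $V_2(Q_T)$ and $L^\infty(Q_T)$ bounds for $\varphi^\varepsilon$. Testing the equation with $\theta=\varphi^\varepsilon$, integrating in time, using the bound on the advection term via Young's inequality to absorb $\mu\nabla\varphi^\varepsilon$ (i.e.\ $|\int \vec h\varphi^\varepsilon\cdot\nabla\varphi^\varepsilon| \le \tfrac{\mu}{2}\|\nabla\varphi^\varepsilon\|^2 + \tfrac{1}{2\mu}\|\vec h\|_\infty^2\|\varphi^\varepsilon\|^2$), and applying Gronwall's inequality gives $\|\varphi^\varepsilon\|_{V_2(Q_T)}\le C$ uniformly. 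The $L^\infty$ bound follows from the De Giorgi–Moser / Aleksandrov-type maximum principle estimates for linear parabolic equations with bounded coefficients and $L^\infty$ (or $L^q$ with $q$ large) right-hand side — exactly the machinery already invoked in Lemma \ref{EstThm}; one can cite the same reference as in \cite{Finotti2012optimal}. From the equation itself one then reads off a uniform bound on $\varphi^\varepsilon_t$ in $L^2((0,T),H^1(\Omega)^*)$.

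Finally, the uniform $V_2$ bound gives, along a subsequence, $\varphi^\varepsilon\rightharpoonup\varphi$ weakly in $L^2((0,T),H^1(\Omega))$ and (by Aubin–Lions, using the $\varphi^\varepsilon_t$ bound) strongly in $L^2(Q_T)$; the $L^\infty$ bound passes to the limit by lower semicontinuity. Strong $L^2$ convergence of $\varphi^\varepsilon$ together with $u^\varepsilon\to u$ strongly in $L^2(Q_T)$ (which itself follows from running the same difference-quotient argument without dividing, or directly from continuity of $m\mapsto u(m)$) lets me pass to the limit in every term of the weak formulation: the linear terms by weak convergence, the products $(u^\varepsilon+u)\varphi^\varepsilon$ and $l u^\varepsilon$ by strong-times-weak (or strong-times-strong) convergence. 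This yields precisely the weak form of \eqref{MasterEqu2}. Since the limit problem \eqref{MasterEqu2} is linear with bounded coefficients, it has a unique weak solution, so the whole family $\varphi^\varepsilon$ (not just a subsequence) converges to $\varphi$. The main obstacle is the $L^\infty$ bound on $\varphi^\varepsilon$: the energy estimate only gives $V_2$, and upgrading to a uniform sup-norm bound requires the parabolic regularity theory for linear equations with merely bounded measurable coefficients — this is where the hypotheses $\vec h\in L^\infty$, $u_0\in L^\infty\cap H^1$, and $m\in U$ are all used, and it is the step I would need to handle most carefully (or import wholesale from \cite{Finotti2012optimal} / standard references such as Ladyzhenskaya–Solonnikov–Uraltseva).
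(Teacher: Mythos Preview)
Your proposal is correct and follows essentially the same route as the paper: derive the linear parabolic equation for the difference quotient $\varphi^\varepsilon$ (with zeroth-order coefficient $m-(u^\varepsilon+u)$ and source $l\,u^\varepsilon$), invoke the same a~priori machinery as in Lemma~\ref{EstThm} to get uniform $V_2$ and $L^\infty$ bounds, extract a weak limit, and pass to the limit using $u^\varepsilon\to u$. The paper is terser---it writes ``as in the proofs of Theorem~\ref{MainThm1}'' for the bounds and the limit passage---while you spell out the energy estimate, Aubin--Lions, and the uniqueness argument for full-family convergence, but the strategy is identical.
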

\begin{proof} First, let us denote $u^\varepsilon=u(m+\varepsilon l)$, $u=u(m)$ for each $m\in U$ and $m+\varepsilon l\in U$.
Through that $u$ solves \eqref{MasterEqu}, we can prove that $u^\varepsilon$ solves
\begin{equation}\label{Pertb}
\begin{cases}
u_t^\varepsilon-\nabla\cdot[\mu\nabla{u^\varepsilon} - u^\varepsilon\vec{h}] &= u^\varepsilon[m + \varepsilon l - u^\varepsilon],\\
\mu\frac{\partial u^\varepsilon}{\partial v}-u^\varepsilon\vec{h}\cdot v&=0,\\
u(\cdot,0)&= u_0.
\end{cases}
\end{equation}
Then, by Lemma \ref{EstThm}, it follow from that
\begin{equation}\label{ApprBound}
\|u^\varepsilon\|_{V_2(Q_T)}, \|u\|_{V_2(Q_T)},
\|u^\varepsilon\|_{L_\infty(Q_T)},\|u\|_{L_\infty(Q_T)}\leq C<\infty, \quad\forall \varepsilon>0,
\end{equation}
where $C$ is a constant depending only on $\mu,d,|\Omega|,T,\|u_0\|_{L_\infty},\|h\|_{L_\infty(Q_T)}$ and $M$.
Define
\begin{equation*}
\gamma^\varepsilon(x,t)=
\frac{(u^\varepsilon)^2(x,t)-u^2(x,t)}{u^\varepsilon(x,t)-u(x,t)} = u^\varepsilon(x,t) + u(x,t),\quad \forall(x,t)\in{Q_T}.
\end{equation*}
It is easy to see that $\gamma^\varepsilon(x,t)$ is uniformly bounded. In the meanwhile, it follows from the Theorem \ref{MainThm1} and by the uniqueness of solution $u=u(m)$ of \eqref{MasterEqu}, we have
\begin{equation*}
u^\varepsilon\rightarrow u \quad \mbox{in}\quad L^2(Q_T);\quad u_t^\varepsilon\rightharpoonup u_t \quad \mbox{in}\quad L^2(Q_T);
\quad u^\varepsilon\rightharpoonup u \quad \mbox{in}\quad L^2((0,T), H^1(\Omega)).
\end{equation*}
Moreover, this gives
\begin{equation*}
u^\varepsilon\rightarrow u \quad a.e.\quad\text{in}\quad Q_T,\quad \mbox{and}\quad \gamma^\varepsilon(x,t)\rightarrow 2u(x,t),\quad a.e. \quad \mbox{in}\quad Q_T.
\end{equation*}
Recall that $\varphi^\varepsilon=\frac{u^\varepsilon-u}{\varepsilon}$. Then subtracting \eqref{Pertb} from \eqref{MasterEqu}  and dividing by $\varepsilon$, we obtain
\begin{equation}\label{sensit}
\begin{cases}
\varphi_t^\varepsilon-\nabla\cdot[\mu\nabla{\varphi^\varepsilon}-\varphi^\varepsilon\vec{h}]-\varphi^\varepsilon[m-\gamma^\varepsilon]&= u^\varepsilon l,\\
\mu\frac{\partial {\varphi^\varepsilon}}{\partial \nu}-\varphi^\varepsilon\vec{h}\cdot \nu&=0,\\
\varphi^\varepsilon(\cdot,0)&=0.
\end{cases}
\end{equation}
As in the proofs of Theorem \ref{MainThm1}, we have
\begin{equation*}
\|\varphi^\varepsilon\|_{V_2(Q_T)}\leq C,\quad \|\varphi^\varepsilon\|_{L^\infty}\leq C,\quad \forall \varepsilon.
\end{equation*}
Then we can assume that $\varphi^\varepsilon\rightharpoonup\varphi$ in $L^2((0,T),H_1(\Omega))$. Using the estimates \eqref{ApprBound}, the equation \eqref{sensit}, and as in the proof Theorem \ref{MainThm1}, we obtain \eqref{MasterEqu2}. This concludes the proof of Lemma \ref{MainLem1}.
\end{proof}
\begin{lem}
Let $p$ be the solution of \eqref{JointEqu}. Then, there is a constant $C$ such that
\[
0\leq p(x,t)\leq C\quad\mbox{and}\quad |\nabla p(x,t)| \leq C,\quad \forall (x,t)\in Q_T.
\]
where $C$ depends on $|\Omega|,\mu,M,T,\|u_0\|,\|h\|_{L^\infty},d$.
\end{lem}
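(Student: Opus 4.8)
Since \eqref{JointEqu} is a \emph{backward} parabolic problem, the plan is to reverse time first: with $\tau=T-t$ and $q(x,\tau)=p(x,T-\tau)$, problem \eqref{JointEqu} becomes the forward problem $q_\tau-\mu\Delta q-\vec h\cdot\nabla q-[m^*-2u^*]q=1$ in $Q_T$, with $\partial q/\partial\nu=0$ on $\partial\Omega\times(0,T)$ and $q(\cdot,0)=0$ in $\Omega$ (all time-dependent coefficients now evaluated at $T-\tau$). This is a \emph{linear} parabolic equation whose zeroth-order coefficient $c:=m^*-2u^*$ is bounded, since $0\le m^*\le M$ and, by Lemma~\ref{EstThm}, $0\le u^*\le C$; its right-hand side is the constant $1$ and its initial datum vanishes. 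Thus the problem has essentially the same structure as the state equation treated in Lemmas~\ref{Lowerlem}--\ref{EstThm}, and the two-sided bound on $p$ will follow by the same tools.

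For non-negativity I would proceed as in Lemma~\ref{Lowerlem}: after the substitution $q=e^{\lambda\tau}w$ with $\lambda>\|c\|_{L^\infty}$ (which makes the zeroth-order coefficient of the $w$-equation negative and removes any sign bookkeeping), test the weak formulation against the negative part $\min(w,0)$ and apply Gronwall's inequality; since the source $1\ge0$ and the datum $0\ge0$, this forces $w\ge0$, hence $q\ge0$ and $p\ge0$. For the upper bound, note that $c=m^*-2u^*\le m^*\le M$ because $u^*\ge0$; consequently the spatially constant function $\bar q(\tau):=(e^{M\tau}-1)/M$, the solution of $\bar q'=1+M\bar q$, $\bar q(0)=0$, is a supersolution of the $q$-equation (its gradient and Laplacian vanish and $1+(M-c)\bar q\ge1$ since $\bar q\ge0$), so the weak comparison principle gives $0\le q\le\bar q\le(e^{MT}-1)/M$ throughout $Q_T$; equivalently $0\le p\le(e^{MT}-1)/M=:C$. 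A Moser iteration as in the proof of Lemma~\ref{EstThm} would give the same $L^\infty$ bound, with the same dependence of the constant.

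The gradient estimate is the real content. Here I would write the $q$-equation as $q_\tau-\mu\Delta q=g$ with $g:=1+\vec h\cdot\nabla q+[m^*-2u^*]q$ and invoke the $L^r$ ($1<r<\infty$) maximal regularity (Calder\'on--Zygmund) theory for $\partial_\tau-\mu\Delta$ with homogeneous Neumann data on the smooth domain $\Omega$: whenever $\nabla q\in L^r(Q_T)$ one has $g\in L^r(Q_T)$, because $\vec h,m^*,u^*\in L^\infty$ and $q\in L^\infty$, and maximal regularity then yields $q$ in the anisotropic Sobolev space $W^{2,1}_r(Q_T)$. Starting from $\nabla q\in L^2(Q_T)$ (part of the hypothesis $p\in L^2((0,T),H^1(\Omega))$, and obtainable in any case by testing the equation with $q$ and using Gronwall), the parabolic Sobolev embedding improves the integrability of $\nabla q$, and a finite bootstrap reaches $W^{2,1}_r(Q_T)$ for some $r>d+2$; the embedding $W^{2,1}_r(Q_T)\hookrightarrow C^{1+\alpha,(1+\alpha)/2}(\overline{Q_T})$ then gives $\nabla q\in L^\infty(Q_T)$ (indeed H\"older continuous), hence $|\nabla p|\le C$. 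Tracking the constants through these steps shows the final $C$ depends only on $|\Omega|,\mu,M,T,\|u_0\|,\|\vec h\|_{L^\infty},d$, the dependence on $\|u_0\|$ entering through the $L^\infty$ bound on $u^*$ from Lemma~\ref{EstThm}.

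I expect the only non-routine step to be absorbing the first-order term $\vec h\cdot\nabla q$, whose coefficient is merely bounded: in the maximal-regularity inequality $\|q\|_{W^{2,1}_r}\le C\|g\|_{L^r}$ the contribution $\|\vec h\cdot\nabla q\|_{L^r}\le\|\vec h\|_{L^\infty}\|\nabla q\|_{L^r}$ on the right must be moved to the left. This is done in the standard way, either via an interpolation inequality $\|\nabla q\|_{L^r(Q_\delta)}\le\varepsilon\|q\|_{W^{2,1}_r(Q_\delta)}+C_\varepsilon\|q\|_{L^r(Q_\delta)}$ on short time slabs $Q_\delta=\Omega\times(0,\delta)$, with $\delta$ chosen so small that $C\|\vec h\|_{L^\infty}$ times the interpolation constant is $<1$, followed by patching over the order $T/\delta$ slabs, or by a contraction-mapping argument in $W^{2,1}_r$ on a short interval, again patched; both reduce the lemma to a bookkeeping of constants.
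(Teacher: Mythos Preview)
Your approach is essentially the same as the paper's: reverse time to obtain a forward linear parabolic equation with bounded coefficients (the bound on $u^*$ coming from Lemma~\ref{EstThm}), invoke the maximum principle for $p\ge 0$, and then use $W^{2,1}_r$ parabolic regularity with $r>d+2$ together with the Sobolev embedding to obtain the $L^\infty$ bounds on $p$ and $\nabla p$. The paper simply cites ``the maximum principle'' and ``parabolic regularity theory'' in one line each, whereas you spell out an explicit supersolution $(e^{M\tau}-1)/M$ for the upper bound, a bootstrap from $\nabla q\in L^2$ to $W^{2,1}_r$, and the absorption of the drift term $\vec h\cdot\nabla q$ via interpolation on short time slabs; these are the details hidden behind the paper's citations, not a different route.
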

\begin{proof}
Let us denote $q(x,t)=p(x,T-t)$, then it follows that $q$ solves the equation
\begin{equation}\label{JointEqu2}
\begin{cases}
&q_t-\mu\Delta q + b\cdot\nabla q+cq=1,\\
&\nabla q\cdot \nu=0,\\
&q(\cdot,0)=0,
\end{cases}
\end{equation}
where $b(x,t)=-\vec{h}(x,T-t)$ and $c(x,t)=2u^*(x,T-t)-m^*(x,T-t)$ for all $(x,t)\in Q_T$. From Lemma \ref{EstThm}, it follows Equation \eqref{JointEqu2} is a linear parabolic equation with bounded coefficients. By the maximum principle, it follows that $p\geq0$. On the other hand, by the parabolic regularity theory, we have
\[
\|p\|_{W_{l}^{2,1}(Q_T)}=\|q\|_{W_{l}^{2,1}(Q_T)}\leq C=C(l),\quad \forall l\geq2,
\]
where
\[
\|p\|_{W_{l}^{2,1}(Q_T)} :=\|p\|_{L^{l}(Q_T)}+\|p_t\|_{L^{l}(Q_T)}+\|\nabla p\|_{L^{l}(Q_T)}+\sum_{i,j=1}^{d}\|p_{x_ix_j}\|_{L^l(Q_T)}.
\]
Then it follows from the Sobolev embedding theorem that for $l>d+2$, we have
\[
|p(x,t)|, |\nabla p(x,t)| \leq C(l),\quad \forall(x,t)\in {Q_T},
\]
which yields the desired estimates.
\end{proof}

\section{The proof of the main results}
In this section, we mainly provide the proof for Theorem \ref{MainThm1}, Theorem \ref{MainThm2} and Theorem \ref{MainThm3}.

\begin{proof}[Proof of Theorem \ref{MainThm1}] First,we show the existence of $\sup\limits_{m\in U} J(m)$.
By Lemma \ref{Uplem}, $J(m)\leq C$ where $C$ is a constant depending on $\|m\|_{L^\infty(Q_T)}$
$\|u_0\|_{L^\infty}$ and $T$ only. It is obvious that there exist a maximizing sequence $\{m^n\}\in U$ such that
\begin{equation}
\lim_{n\rightarrow\infty}J(m^n)=\sup_{m\in U}J(m).
\end{equation}
Denote $u^n=u(m^n)$, $u^n$ is the corresponding solutions of \eqref{MasterEqu} and the control $m$ is $m^n$.
Next,we need to show the estimates of $u^n,\nabla u^n$ and $u_t^n$.
By the theorem \ref{EstThm},we obtain that
\begin{equation}\label{BoundEst1}
\|u^n\|_{V_2(Q_T)},\quad\|u^n\|_{L^\infty(Q_T)}\leq C<\infty,\quad \forall n\in \mathbb{N},
\end{equation}
for some constant $C>0$ depending only on $\mu,d,|\Omega|,T,\|u_0\|_{L_\infty},\|h\|_{L_\infty(Q_T)}$ and $M$, where $M$ is a fixed constant defined in \eqref{ControlSet}.
From\eqref{BoundEst1}, we can assume that
\begin{equation}\label{BoundEst2}
u^n\rightharpoonup u^* \in {L^2((0,T),H^1(\Omega))}.
\end{equation}
And for each $n$ and each $\phi\in L^2((0,T),H^1(\Omega))$, the weak form of the solution $u^n$ is
\begin{equation}\label{WeakForm}
\begin{split}
\int_{Q_T}u_{t}^{n}\phi\mathrm{d}x\mathrm{d}t=-\mu\int_{Q_T}&\nabla{u^n}\cdot\nabla\phi\mathrm{d}x\mathrm{d}t+\int_{Q_T}(\vec{h}\nabla{\phi})u^n\mathrm{d}x\mathrm{d}t\\
&+\int_{Q_T}m^nu^n\phi\mathrm{d}x\mathrm{d}t-\int_{Q_T}(u^n)^2\phi\mathrm{d}x\mathrm{d}t.
\end{split}
\end{equation}
Thus, by combining \eqref{BoundEst1}, \eqref{WeakForm} and H$\ddot{o}$lder inequality, it shows that
\begin{equation*}
|\int_{Q_T}u_{t}^{n}\phi\mathrm{d}x\mathrm{d}t|\leq C\|\phi\|_{L^2((0,T),H^1(\Omega))},\quad \forall n\in \mathbb{N}.
\end{equation*}
Therefore,
\begin{equation}\label{UTest}
\|u_{t}^{n}\|_{L^2(0,T,H^1(\Omega)^*)}\leq C,\quad \forall n\in \mathbb{N},
\end{equation}
where the constant $C$ depends only on $\mu,d,|\Omega|,T,\|u_0\|_{L_\infty},\|h\|_{L_\infty(Q_T)}$ and $M$. Using the results in \cite{Brezis2011functional} and the forms \eqref{BoundEst1},\eqref{BoundEst2},\eqref{UTest}, it follows that
\begin{eqnarray}
u^n\rightarrow u^* \quad\text{in}\quad L^2(Q_T),\quad \nabla u^n\rightharpoonup \nabla u^* \quad\text{in}\quad L^2(Q_T),\quad\text{and} \nonumber
\\
u_t^n\rightharpoonup u_t^* \quad\text{in}\quad L^2(0,T,H^1(\Omega)^*).\label{Converg}
\end{eqnarray}
Due to the weakly compactness of $L^2(\Omega)$ and the bounded control set $U$ defined in \eqref{ControlSet}, there exist $m^*\in U$ such that
\begin{equation}\label{Convergm}
m^n\rightharpoonup m^* \quad\text{in}\quad L^2(\Omega).
\end{equation}
By the definition of weak solution, $u^n$ satisfies
\begin{equation}\label{Weakform}
\begin{split}
\int_{Q_T}u_{t}^{n}\psi\mathrm{d}x\mathrm{d}t&=-\mu\int_{Q_T}\nabla{u^n}\cdot\nabla\psi\mathrm{d}x\mathrm{d}t+\int_{Q_T}(\vec{h}\nabla{\psi})u^n\mathrm{d}x\mathrm{d}t
\\
&+\int_{Q_T}m^nu^n\psi\mathrm{d}x\mathrm{d}t-\int_{Q_T}(u^n)^2\psi\mathrm{d}x\mathrm{d}t,\quad \forall n.
\end{split}
\end{equation}
From the weak convergence in \eqref{BoundEst2}, \eqref{Converg} and \eqref{Convergm}, it follows
\begin{equation}\label{Converg2}
\begin{split}
\lim_{n\rightarrow\infty}\int_{Q_T} u_{t}^{n}\psi\mathrm{d}x\mathrm{d}t=\int_{Q_T} u_{t}^{*}\psi\mathrm{d}x\mathrm{d}t,\\
\lim_{n\rightarrow\infty}\int_{Q_T} \nabla u^{n}\cdot\nabla\psi\mathrm{d}x\mathrm{d}t=\int_{Q_T} \nabla u^{*}\cdot\nabla\psi\mathrm{d}x\mathrm{d}t,\\
\lim_{n\rightarrow\infty}\int_{Q_T} \vec{h}\nabla\psi u^n\mathrm{d}x\mathrm{d}t=\int_{Q_T} \vec{h}\nabla\psi u^*\mathrm{d}x\mathrm{d}t.
\end{split}
\end{equation}
According to the strong convergence of the sequence $\{u^n\}_{n\in N}$ and the weak convergence of the sequence $\{m^n\}_{n\in N}$, we conclude
\begin{equation}
\begin{split}
&|\int_{Q_T} m^nu^{n}\psi\mathrm{d}x\mathrm{d}t-\int_{Q_T} m^*u^*\psi\mathrm{d}x\mathrm{d}t|\\
&=|\int_{Q_T} m^n(u^{n}-u^*)\psi\mathrm{d}x\mathrm{d}t-\int_{Q_T} (m^n-m^*)u^*\psi\mathrm{d}x\mathrm{d}t|\\
&\leq M\|\psi\|_{L^2(Q_T)}\|u^n-u^*\|_{L^2(Q_T)}+|\int_{Q_T} (m^n-m^*)u^{*}\psi\mathrm{d}x\mathrm{d}t|\\
&\rightarrow 0,\quad \text{as}\quad n\rightarrow \infty,
\end{split}
\end{equation}
which leads
\begin{equation}\label{Converg1}
\lim_{n\rightarrow\infty}\int_{Q_T} m^nu^n\psi\mathrm{d}x\mathrm{d}t=\int_{Q_T} m^*u^*\psi\mathrm{d}x\mathrm{d}t.
\end{equation}
Collecting those convergence terms in\eqref{Converg}, \eqref{Converg2} and \eqref{Converg1}, and using the equation \eqref{Weakform}, we arrive at
\begin{equation}
\begin{split}
\int_{Q_T}u_{t}^{*}\psi\mathrm{d}x\mathrm{d}t=-\mu\int_{Q_T}&\nabla{u^*}\cdot\nabla\psi\mathrm{d}x\mathrm{d}t+\int_{Q_T}(\vec{h}\nabla{\psi})u^*\mathrm{d}x\mathrm{d}t\\
&+\int_{Q_T}m^*u^*\psi\mathrm{d}x\mathrm{d}t-\int_{Q_T}(u^*)^2\psi\mathrm{d}x\mathrm{d}t,\quad\forall n,
\end{split}
\end{equation}
which implies that $u^*$ is the solution of \eqref{MasterEqu} with respect to the control $m^*$. That is to say $u^*=u(m^*)$. On the other hand, using the strong convergence in $L^2(Q_T)$ of the sequence $\{u^n\}_{n\in N}$, and the fact that the function $m\mapsto\int_{Q_T}m^2\mathrm{d}x\mathrm{d}t$ is weakly lower semi-continuous in $L^2(Q_T)$, we also get
\begin{equation}
\begin{split}
\sup_{m\in U}J(m)=\lim_{n\rightarrow\infty}J(m^n)=&\lim_{n\rightarrow \infty}\int_{\Omega}[u^n-B(m^n)^2]\mathrm{d}x \\
&\leq \int_{\Omega}[u^n-B(m^n)^2]\mathrm{d}x=J(m^*).
\end{split}
\end{equation}
This implies that $J(m^*)=\sup\limits_{m\in U}J(m)$. Therefore, $m^*\in U$ is an optimal control and the proof of Theorem \ref{MainThm1} is complete.
\end{proof}
\begin{proof}[Proof of Theorem \ref{MainThm2}] Suppose $m^*$ is an optimal control. Let $l\in U$ such that $m^*+\varepsilon l\in U$ for sufficiently small $\varepsilon>0$ and denote $u^\varepsilon=u(m^*+\varepsilon l)$ be the unique solution of \eqref{MasterEqu} when the control term is $m^*+\varepsilon l$.

The operator in the adjoint equation is the formal analysis ``adjoint'' of the operator in the sensitivity equation \eqref{MasterEqu2} at $m^*$. Equation \eqref{JointEqu} is linear in $p$ and its coefficients are measurable and bounded. By the change of variable $t\rightarrow T-t$, the existence and uniqueness of the weak solution $p$ of \eqref{JointEqu} follows by Galerkin's method (see \cite{Evans2010partial}).

Observe that the directional derivative of $J$ with respect to the control at $m^*$ in the direction of $l$ satisfies
\begin{equation}
\begin{split}
0&\geq\lim_{\varepsilon\rightarrow 0^+}\frac{J(m^*+\varepsilon l)-J(m^*)}{\varepsilon}\\
&=\lim_{\varepsilon\rightarrow 0^+}\frac{1}{\varepsilon}[\int_{Q_T}u^\varepsilon-B|m^*+\varepsilon l|^2\mathrm{d}x\mathrm{d}t-\int_{Q_T}u^*-B|m^*|^2\mathrm{d}x\mathrm{d}t]\\
&=\lim_{\varepsilon\rightarrow 0^+}[\int_{Q_T}\frac{u^\varepsilon-u^*}{\varepsilon}\mathrm{d}x\mathrm{d}t
-\int_{Q_T}B(2m^*l+\varepsilon l^2)\mathrm{d}x\mathrm{d}t]\\
&=\int_{Q_T}\varphi\mathrm{d}x\mathrm{d}t-\int_{Q_T}2Bm^*l\mathrm{d}x\mathrm{d}t
\end{split}
\end{equation}
Using the weak solution formulation for the adjoint problem with test function $\varphi$, we obtain
\begin{equation}\label{Sensiti}
\begin{split}
0&\geq\int_{Q_T}\varphi\mathrm{d}x\mathrm{d}t-\int_{Q_T}2Bm^*l\mathrm{d}x\mathrm{d}t\\
&=\int_{Q_T}p\varphi_t\mathrm{d}x\mathrm{d}t+\int_{Q_T}[\mu\nabla{p}\cdot\nabla{\varphi}-\varphi \vec{h}\nabla p-(m-2u^*)p\varphi]\mathrm{d}x\mathrm{d}t\\
&\qquad -\int_{Q_T}2Bm^*l\mathrm{d}x\mathrm{d}t]\\
&=\int_{Q_T}u^*lp-2Bm^*l\mathrm{d}x\mathrm{d}t\\
&=\int_{Q_T}l(u^* p-2Bm^*)\mathrm{d}x\mathrm{d}t.
\end{split}
\end{equation}
By Theorem \ref{EstThm} and Lemma 4.3 below, we know that there is a constant $C_0>0$ such that
\[
|u^*p-2Bm^*|\leq C_0,\quad on\quad Q_T.
\]
For each $0<\delta<1$, let $\tau_\delta$ be the set $\{(x,t)\in Q_T:|m(x,t)|\leq\delta M\}$.
From \eqref{Sensiti}, let $l=u^*p-2Bm^*$ and we obtain that
\[
\int_{Q_T}l(u^* p-2Bm^*)\mathrm{d}x\mathrm{d}t\leq 0.
\]
Thus, $m^*=\frac{u^*p}{2B}$ on $\tau_\delta$. Since $\delta$ is arbitrary, we conclude that on the set where $|m^*|<M$,
\[
m^*=\frac{u^*p}{2B}.
\]
Now assume that $m^*=M$ on some non-empty $\tau\subseteq Q_T$. Thus, $m^*=M\leq\frac{u^*p}{2B}$. Therefore, we conclude that $m^*=\min\{M,\frac{u^*p}{2B}\}$.  Since $0\leq m\leq M$, we can show that
\[
m^*=\max\{\min\{M,\frac{u^*p}{2B}\},0\}.
\]
This completes the proof of Theorem \ref{MainThm2}.
\end{proof}

\begin{proof}[Proof of Theorem \ref{MainThm3}]
In the above, the existence of the optimal control and corresponding adjoint and states have been proved. In the following, we prove the uniqueness of the system.

Let $m^*_1$ and $m^*_2$ be two controls corresponding to solutions of the optimal system. Denote $u_i=u(m^*_i)$, $p_i=p(m^*_i)$ for $i=1,2$ to be the state solution and the solution of the adjoint problem \eqref{JointEqu}. For some $\lambda>0$ which will  be determined and denote $u_i = e^{\lambda t}w_i$ and $p_i = e^{-\lambda t}z_i$. If we let $W = w_1 - w_2, Z= z_1 - z_2$, and $\widetilde {m} = m^*_1-m^*_2$, we then obtain
\begin{equation}\label{Uest}
|\widetilde{m}|=|m^*_1-m^*_2|\leq\frac{1}{2B}|u_1p_1-u_2p_2|\leq\frac{C}{B}[|Z|e^{-\lambda t}+|W|e^{\lambda t}].
\end{equation}
By subtracting the equation of $u_1$ and $u_2$, we see that $W$ solves
\begin{equation}\label{Udif}
\begin{cases}
W_t + \lambda W-\nabla\cdot[\mu\nabla{W}-W\vec{h}] &= \widetilde{m}w_2 +[ m^*_1 - e^{\lambda t}(w_1 + w_2)] W,\\
(\mu\nabla W-W\vec{h})\cdot \nu=0,\\
W(\cdot,0)=0.
\end{cases}
\end{equation}
Similarly, $Z$ also solves
\begin{equation}\label{Pdif}
\begin{cases}
-Z_t + \lambda Z - \mu\Delta Z-\nabla Z\cdot\vec{h} = \widetilde{m}z_2 + m^*_1Z - 2e^{\lambda t}(z_1W + w_2Z),\\
\nabla Z\cdot \nu=0,\\
Z(\cdot,T)=0.
\end{cases}
\end{equation}
Multiplying \eqref{Udif} by $W$ and using the integration by parts, Holders's inequality, Young's inequality, we get
\begin{equation*}
\begin{split}
&\frac{1}{2}\frac{d}{dt}\int_\Omega W^2\mathrm{d}x+\mu\int_{\Omega}|\nabla{W}|^2\mathrm{d}x
\\
&\leq \int_\Omega[|w_2||\widetilde{m}||W|+[ m^*_1 - e^{\lambda t}(w_1 + w_2)-\lambda]|W|^2]\mathrm{d}x
+\int_{\Omega}|W||\vec{h}||\nabla{W}|\mathrm{d}x
\\
&\leq [C+\frac{C}{B}-\lambda]\int_\Omega|W|^2\mathrm{d}x+\frac{\mu}{2}\int_\Omega|\nabla W|^2\mathrm{d}x+\frac{C}{B}\int_\Omega|Z|^2\mathrm{d}x.
\end{split}
\end{equation*}
Thus,
\[
\frac{1}{2}\frac{d}{dt}\int_\Omega W^2\mathrm{d}x+\frac{\mu}{2}\int_{\Omega}|\nabla{W}|^2\mathrm{d}x
\leq [C_1+\frac{C_1}{B}-\lambda]\int_\Omega|W|^2\mathrm{d}x+\frac{C_1}{B}\int_\Omega|Z|^2\mathrm{d}x.
\]
Integrating this inequality with respect to time, we also get
\begin{equation}\label{EstW}
\begin{split}
\sup_t \int_\Omega W^2\mathrm{d}x+\mu\int_{Q_T}|\nabla W|^2\mathrm{d}x\mathrm{d}t
\leq &2[C_1+\frac{C_1}{B}-\lambda]\int_{Q_T}|W|^2\mathrm{d}x\mathrm{d}t\\
&+2\frac{C_1}{B}\int_{Q_T}|Z|^2\mathrm{d}x\mathrm{d}t.
\end{split}
\end{equation}
Similar process to Equation \eqref{Pdif}, we obtain
\begin{equation*}
\begin{split}
&\quad -\frac{1}{2}\frac{d}{dt}\int_\Omega Z^2\mathrm{d}x+\mu\int_{\Omega}|\nabla{Z}|^2\mathrm{d}x\\
&\leq \int_\Omega[-\lambda + m^*_1 - 2e^{\lambda t}w_2]Z^2 \mathrm{d}x+\int_\Omega|\widetilde{m}||z_2||Z| \mathrm{d}x\\
&\quad +\int_\Omega|\vec{h}||\nabla{Z}| |Z| \mathrm{d}x + \int_{\Omega}2e^{\lambda t} |z_1||W||Z|dx\\
&\leq [C_2+C_3e^{4\lambda T}-\lambda]\int_\Omega|Z|^2\mathrm{d}x+\frac{\mu}{2}\int_\Omega|\nabla z|^2 \mathrm{d}x+C_4\int_\Omega|W|^2\mathrm{d}x.
\end{split}
\end{equation*}
Thus,
\[
-\frac{1}{2}\frac{d}{dt}\int_\Omega Z^2\mathrm{d}x+\frac{\mu}{2}\int_{\Omega}|\nabla{Z}|^2\mathrm{d}x
\leq [C_2+C_3e^{4\lambda T}-\lambda]\int_\Omega|Z|^2\mathrm{d}x+C_4\int_\Omega|W|^2\mathrm{d}x
\]
Integrating the above inequality with respect to time, we obtain
\begin{equation}\label{EstZ}
\sup_t \int_\Omega Z^2\mathrm{d}x+\mu\int_{Q_T}|\nabla Z|^2\mathrm{d}x\mathrm{d}t
\leq 2[C_2+C_3e^{4\lambda T}-\lambda]\int_{Q_T}|Z|^2\mathrm{d}x + 2C_4\int_\Omega|W|^2\mathrm{d}x.
\end{equation}
Note that the constants $C_1,C_2,C_3,C_4$ all depend on $|\Omega|,\mu,M,T,\|u_0\|,\|h\|_{L^\infty},d$. Then there exists a sufficiently small $T_0>0$ with $T\leq T_0$, sufficiently large positive numbers $B_0$ and $\lambda$ so that
\[
C_1(T)+\frac{C_1(T)}{B}-\lambda+C_4(T)<0\quad\text{and}\quad \frac{C_1(T)}{B}+C_2(T)+C_3(T)e^{4\lambda T}-\lambda<0.
\]
It follows from \eqref{EstW} and \eqref{EstZ} that
\[
\sup_t \int_\Omega W^2\mathrm{d}x+\mu\int_{Q_T}|\nabla W|^2\mathrm{d}x\mathrm{d}t+\sup_t \int_\Omega Z^2\mathrm{d}x+\mu\int_{Q_T}|\nabla Z|^2\mathrm{d}x\mathrm{d}t\leq 0,
\]
which leads
\[
\sup_t \int_\Omega W^2\mathrm{d}x+\sup_t \int_\Omega Z^2\mathrm{d}x\leq 0.
\]
This implies $u_1=u_2, p_1=p_2$. Thus, $m^*_1=m_2^*$.
\end{proof}

\section{Numerical simulation results}
We have run several examples for the case when $f(x,t,u)= u$, with both time-independent and time-dependent function $\vec{h}$. In order to solve the optimality system, we use an iterative scheme with an explicit finite difference method. First, starting with an initial guess for the control function $m$ and using a forward-backward sweep method \cite{Lenhart2007optimal}, we approximate the first state and adjoint. We then obtain the next approximation to the optimal control by evaluating our optimal control characterization. Continuing the above iteration process until the optimal state and optimal control converge.

Let $\mu=0.2$ and the final time $T=1$ in all simulations. Solution $u$ and the optimal control are depend on the settings of control set \eqref{ControlSet} and objective functional \eqref{ObjectF}, initial values $u_0$, advection function $\vec{h}$, we illustrate each effect in the following with one variable changing and others fixed.

\textbf{The effect of the advection function.}
The evolution of population is affected by the diffusion and advection and the initial solution. We compare four optimal controls and states with time-independent and time-dependent advection functions $\vec{h}$ and these simulations show the optimal control has strong relations with the initial solution and the advection. The optimal strategies are concentrating more resource at the maximum value point of initial solution which is corresponding to the favorable habitat in ecology and moving leftward along the positive advection direction. In the simulation, we set $M=10$ and $B=0.1$ and let $u_0$ be the function
\[
u_0=\sin(2\pi x)+1
\]
In the first example, we set $\vec{h}$ as a time-independent function
\[
\vec{h}_1(x)=\sin(6\pi x)+1.1
\]
whose graph  is shown in Figure \ref{fig:1}. $\vec{h}_1$ is a positive function. Due to the influence of advection, the species will move leftward with positive advection function. The corresponding optimal control and state functions are shown in Figure \ref{fig:2}.
\begin{figure}[htb]
\centering
\includegraphics[width=0.4\textwidth]{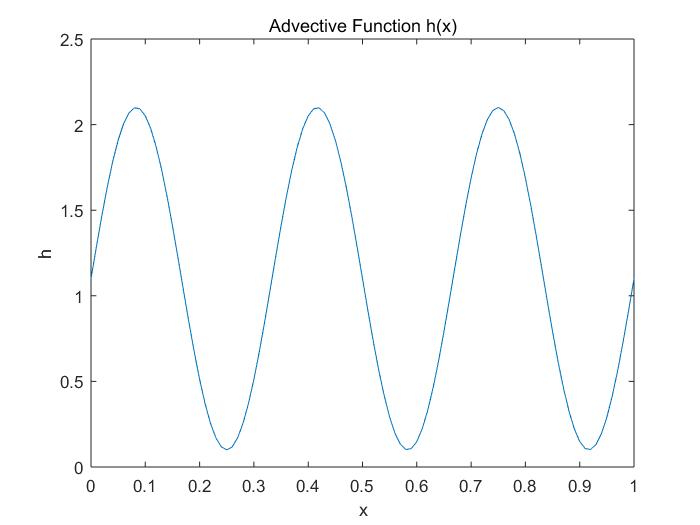}
\caption{$\vec{h}_1(x)=\sin(6\pi x)+1.1$}\label{fig:1}
\end{figure}
\begin{figure}[htb]
\centering
\subfigure{
\includegraphics[width=0.4\textwidth]{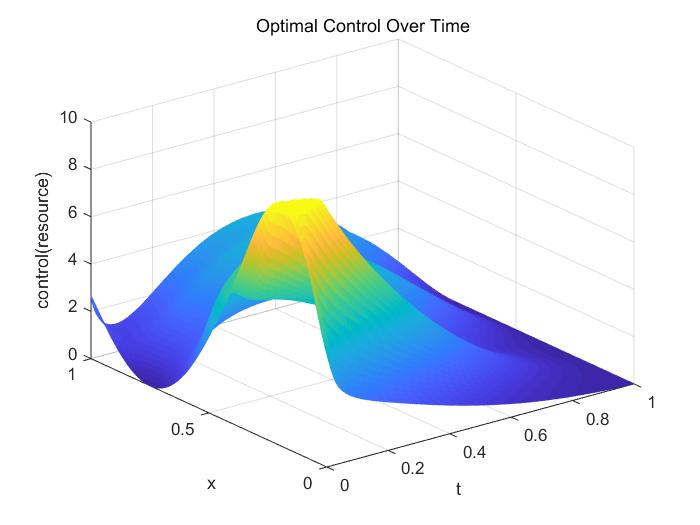}}
\subfigure{
\includegraphics[width=0.4\textwidth]{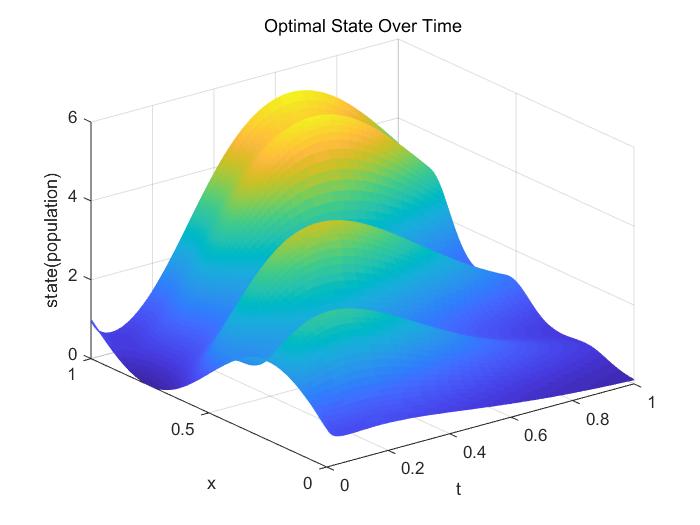}}
\caption{Optimal control and the corresponding state in 1D over time}\label{fig:2}
\end{figure}

In the second example, we consider a time-independent quadratic advection function
\[
\vec{h}_2(x)=-(2-2x)x-0.1,
\]
whose graph is shown in Figure \ref{fig:3} and the corresponding optimal control and state are shown in Figure \ref{fig:4}.
\begin{figure}[htbp]
\centering
\includegraphics[width=0.4\textwidth]{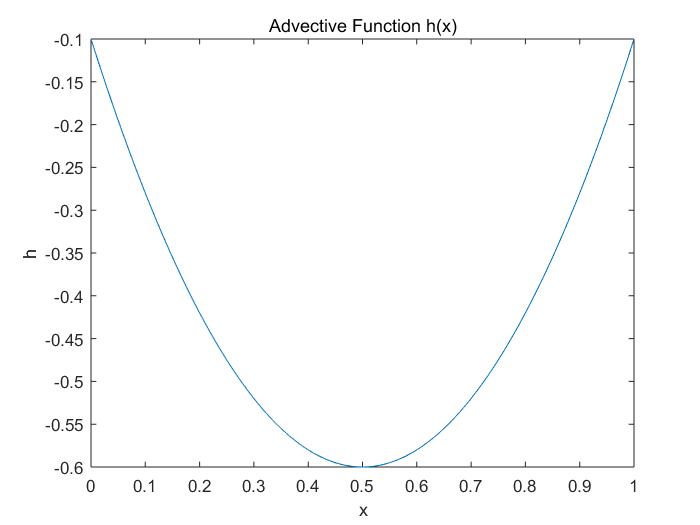}
\caption{$\vec{h}_2(x)=-(2-2x)x-0.1$}\label{fig:3}
\end{figure}
\begin{figure}[htbp]
\centering
\subfigure{
\includegraphics[width=0.4\textwidth]{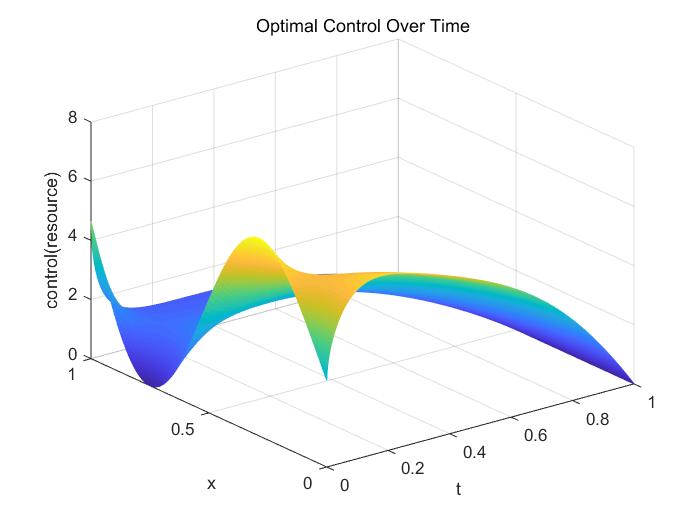}}
\subfigure{
\includegraphics[width=0.4\textwidth]{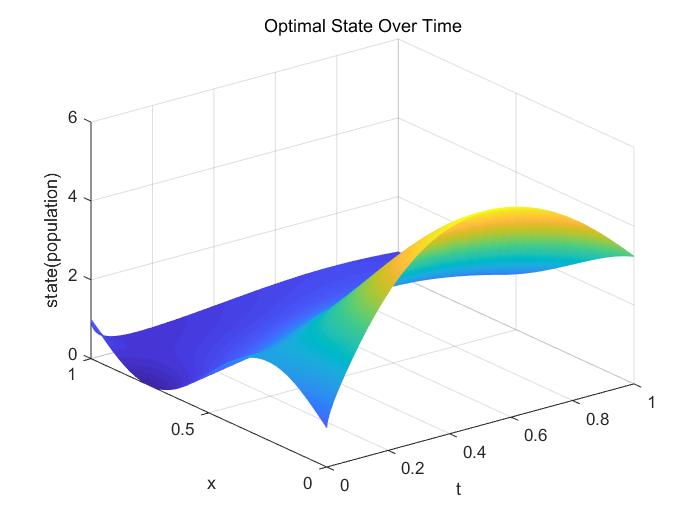}}
\caption{Optimal Control and Corresponding State in 1D Over Time}\label{fig:4}
\end{figure}

The third example is for a time-dependent advection function
\[
\vec{h}_3(x,t)=x^2t+x(1-t)
\]
whose graph is shown in Figure \ref{fig:5} and the corresponding graph of the optimal control and state are in Figure \ref{fig:6}.

\begin{figure}[htbp]
\centering
\includegraphics[width=0.4\textwidth]{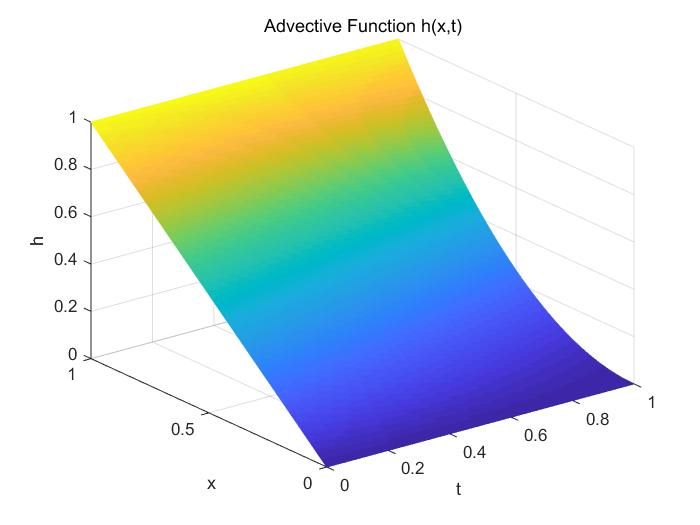}
\caption{$\vec{h}_3(x)=x^2t+x(1-t)$}\label{fig:5}
\end{figure}

\begin{figure}[htbp]
\centering
\subfigure{
\includegraphics[width=0.4\textwidth]{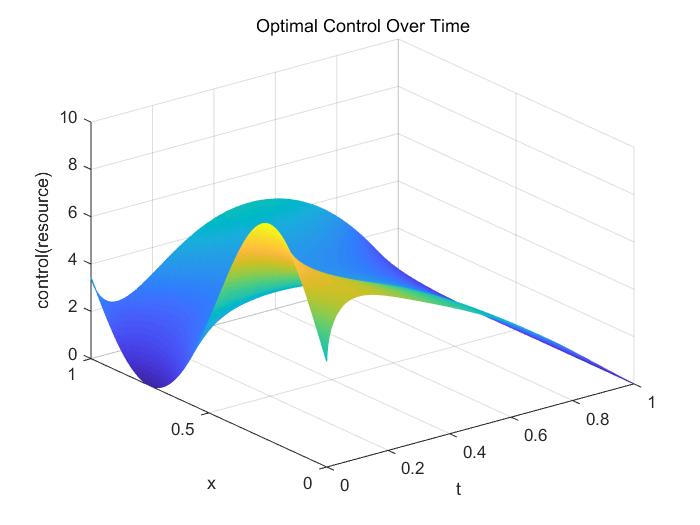}}
\subfigure{
\includegraphics[width=0.4\textwidth]{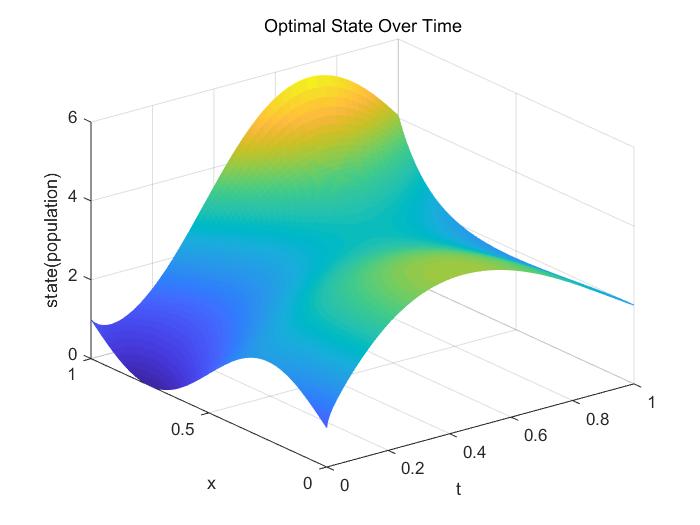}}
\caption{Optimal Control and Corresponding State in 1D Over Time}\label{fig:6}
\end{figure}

The fourth example is for a time-dependent advection function
\[
\vec{h}_4(x,t)=(x-0.5)[-\sin(2\pi t)],
\]
whose graph is shown in Figure \ref{fig:7} and the corresponding optimal control and state are in Figure \ref{fig:8}.

\begin{figure}[htbp]
\centering
\includegraphics[width=0.4\textwidth]{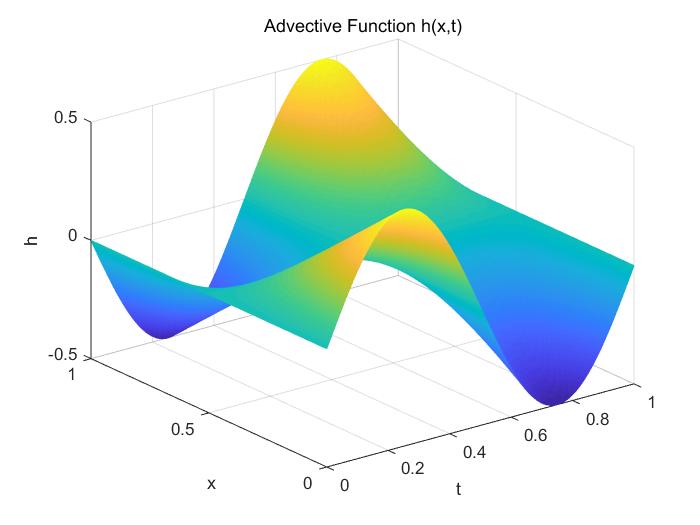}
\caption{$\vec{h}_4(x)=(x-0.5)[-\sin(2\pi t)]$}\label{fig:7}
\end{figure}

\begin{figure}[!htbp]
\centering
\subfigure{
\includegraphics[width=0.4\textwidth]{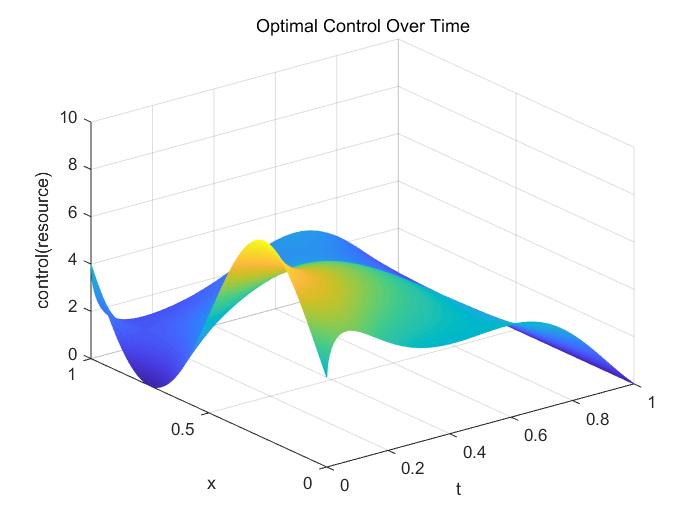}}
\subfigure{
\includegraphics[width=0.4\textwidth]{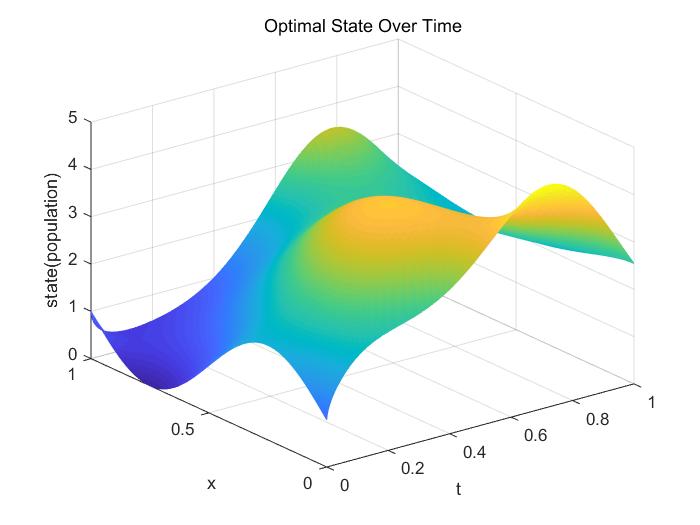}}
\caption{Optimal Control and Corresponding State in 1D Over Time}\label{fig:8}
\end{figure}

In the following, we show the time slices of the optimal state at early, middle and final stages. For simplicity, we let $\vec{h}$ be a constant function, $\vec{h}=0.1$. Figure \ref{fig:9} shows that the species move leftward with positive advection function and the optimal resource increase when the density increase over time.
\begin{figure}[htbp]
\centering
\subfigure{
\includegraphics[width=0.4\textwidth]{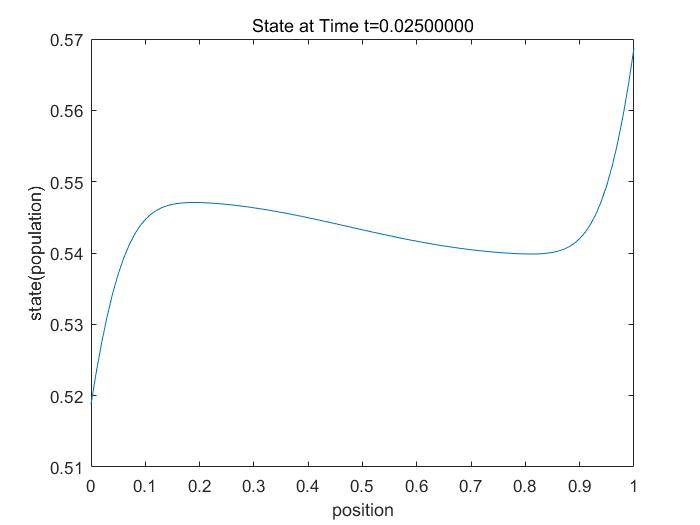}}
\subfigure{
\includegraphics[width=0.4\textwidth]{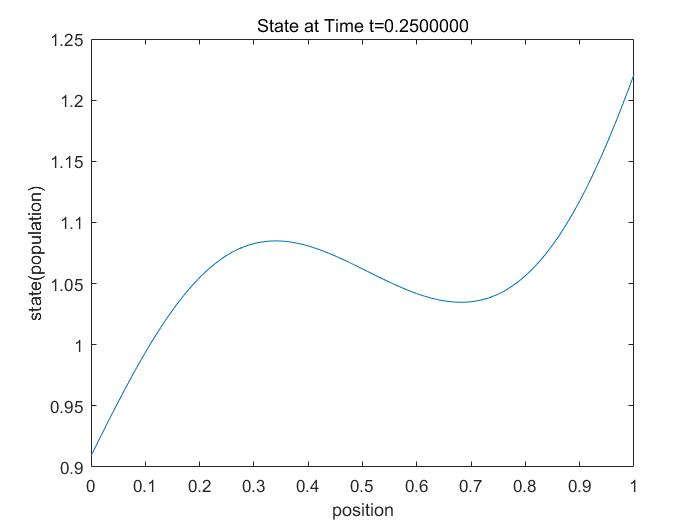}}
\subfigure{
\includegraphics[width=0.4\textwidth]{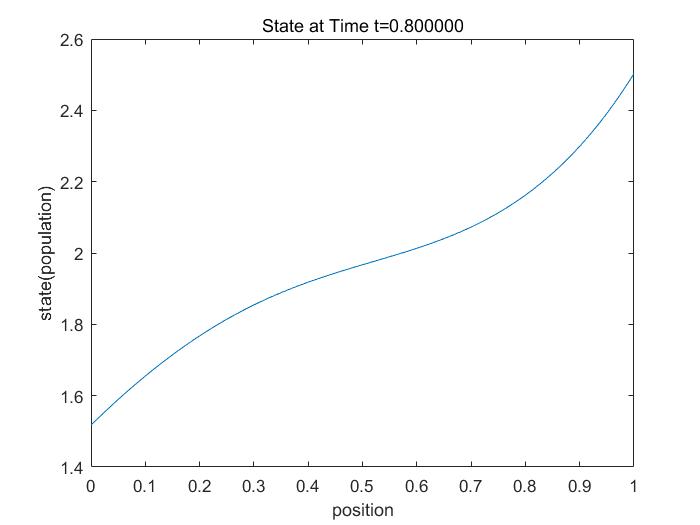}}
\caption{Early, middle and final slices of the state}\label{fig:9}
\end{figure}

\textbf{The effect of the initial values.}
The evolution of the population is affected by the initial solution. In this part, we fix other variables and only let the initial solution change. These figure shows the optimal strategy should concentrate the resource at the maximum value of the initial solution. Let the advection function be
\[
\vec{h}(x)=-\sin(2\pi x).
\]
$B$ is taken to be $B=0.1$ and have sufficient resources which means $M$ is big enough. First, we let $u_0$ be the function
\[
u_0=\sin(2\pi x)+1.
\]
We find where has more species, the more resources should be allocated and the corresponding graphs of the optimal control and state are in Figure \ref{fig:10}.

The second initial function is
\[
u_0=-(x-\frac{1}{2})^2+\frac{1}{4},
\]
whose optimal strategy is similar to the first one and the corresponding graph of the optimal control and state are given in Figure \ref{fig:11}.
\begin{figure}[htb]
\centering
\subfigure{
\includegraphics[width=0.4\textwidth]{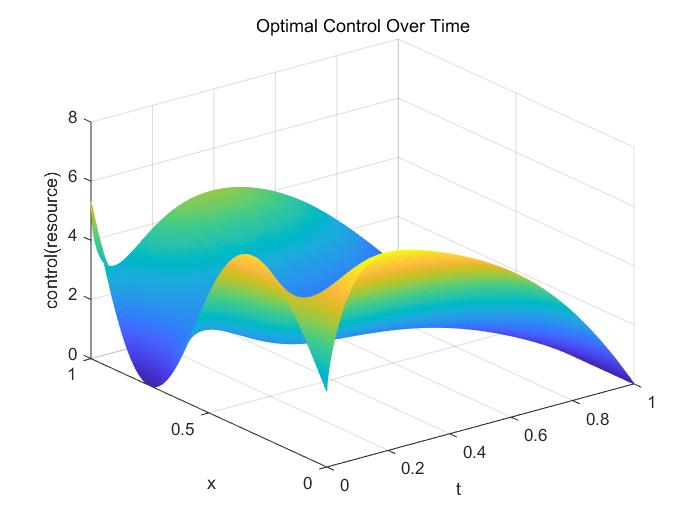}}
\subfigure{
\includegraphics[width=0.4\textwidth]{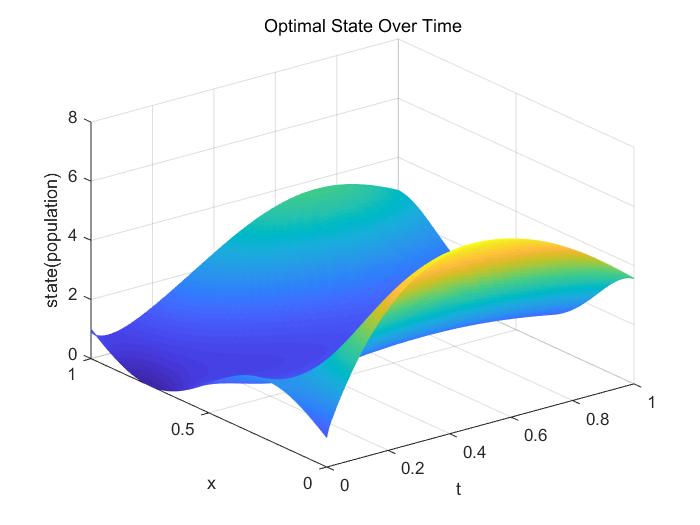}}
\caption{Optimal Control and Corresponding State in 1D Over Time}\label{fig:10}
\end{figure}
\begin{figure}[htb]
\centering
\subfigure{
\includegraphics[width=0.4\textwidth]{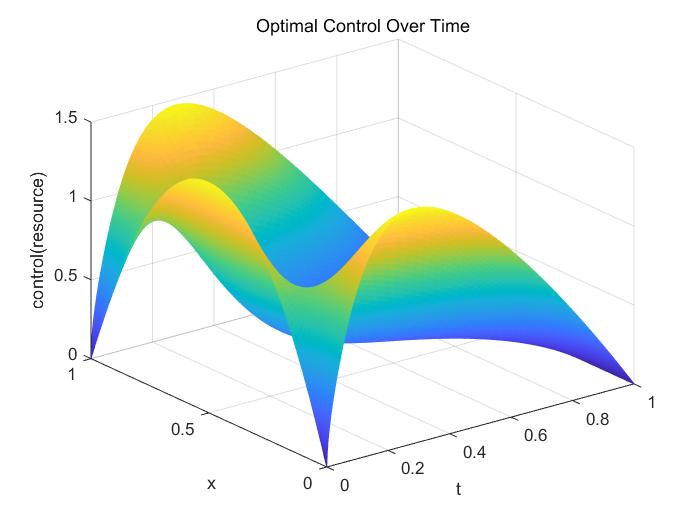}}
\subfigure{
\includegraphics[width=0.4\textwidth]{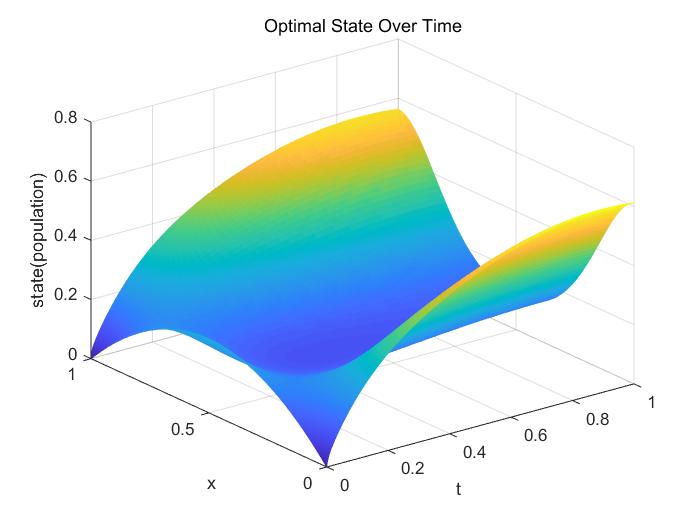}}
\caption{Optimal Control and Corresponding State in 1D Over Time}\label{fig:11}
\end{figure}

 To show a clear relation between $m$ and $u$, we compare the resulting optimal control and corresponding state at time $t=0.1$ in Figure \ref{fig:12} with $u_0=\sin(2\pi x)+1$ and in Figure \ref{fig:13} with $u_0=2x^2-\frac{1}{2}x+1$.
\begin{figure}[!htbp]
\centering
\subfigure{
\includegraphics[width=0.4\textwidth]{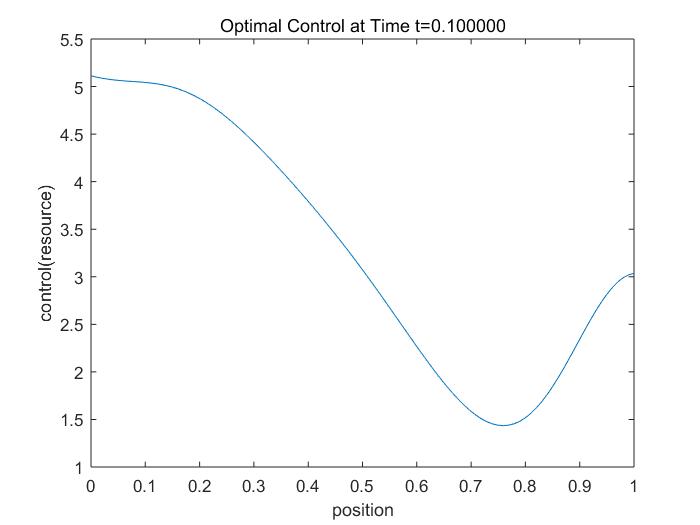}}
\subfigure{
\includegraphics[width=0.4\textwidth]{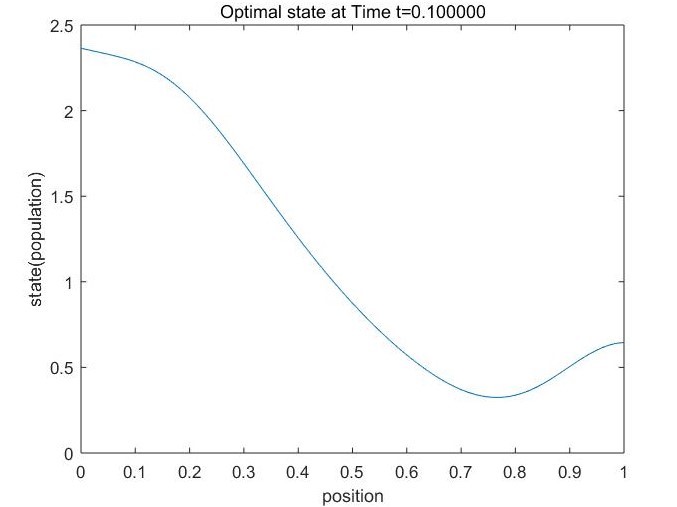}}
\caption{Time Slices of the Optimal Control and Corresponding State in 1D}\label{fig:12}
\end{figure}

\begin{figure}[htb]
\centering
\subfigure{
\includegraphics[width=0.4\textwidth]{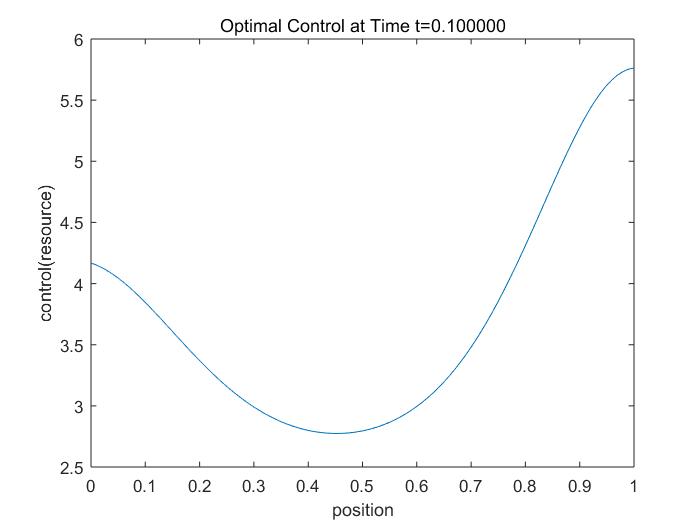}}
\subfigure{
\includegraphics[width=0.4\textwidth]{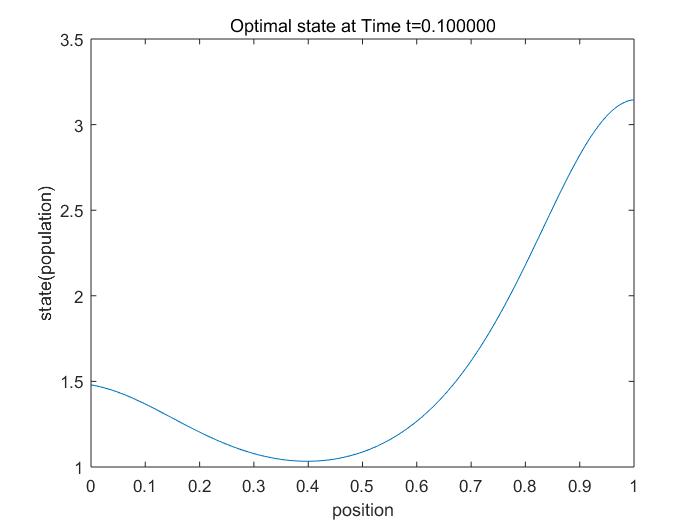}}
\caption{Time Slices of the Optimal Control and Corresponding State in 1D}\label{fig:13}
\end{figure}

\newpage
\textbf{The effect of the maximum resource and the cost constant.}
In the real life, the distribution of resource may have up-bound $M$ at one specific location which means the resource may not be sufficient. In the following, three different up bounds $M$ of the maximum resource are compared. The maximum resource $M$ affects the distribution of the densities, however, the trends in densities' distributions are almost the same. In the simulation, we set
\begin{equation}\label{Eq-h}
\vec{h}(x)=\sin(6\pi x)+1.1,
\end{equation}
\begin{equation}\label{Eq-u0}
u_0=\sin(2\pi x)+1,
\end{equation}
and the cost constant $B=0.1$. We set $M=1,10,20$. When $M=1$, the graph of the optimal control and state is given in Figure \ref{fig:14}. The second case have been discussed before, the graph is same as Figure \ref{fig:2}. The graph of the optimal control and state is given in Figure \ref{fig:15} for $M=20$.
\begin{figure}[htb]
\centering
\subfigure{
\includegraphics[width=0.4\textwidth]{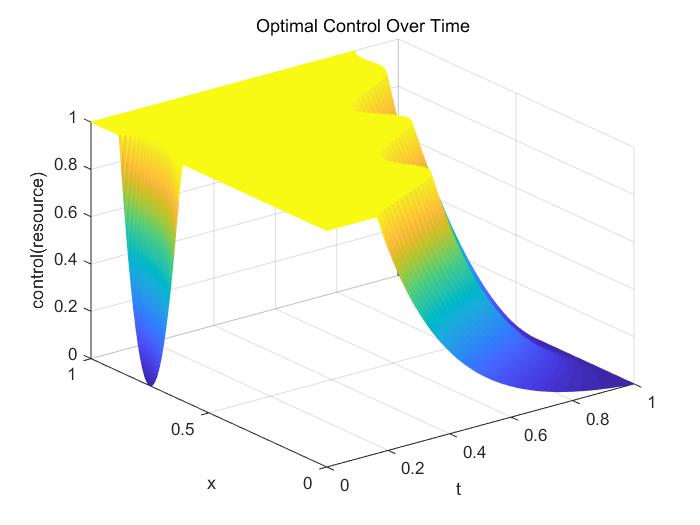}}
\subfigure{
\includegraphics[width=0.4\textwidth]{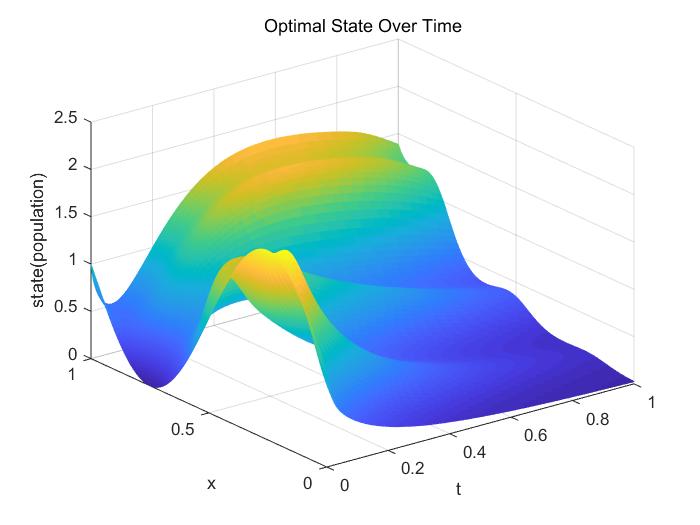}}
\caption{Optimal Control and Corresponding State in 1D Over Time}\label{fig:14}
\end{figure}

\begin{figure}[htb]
\centering
\subfigure{
\includegraphics[width=0.4\textwidth]{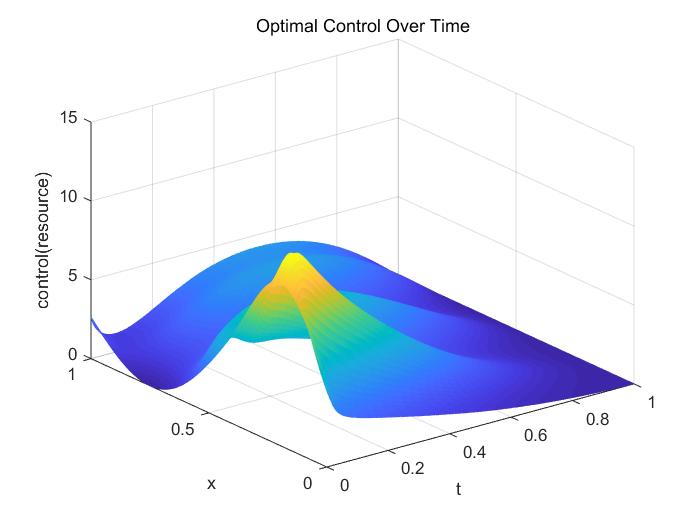}}
\subfigure{
\includegraphics[width=0.4\textwidth]{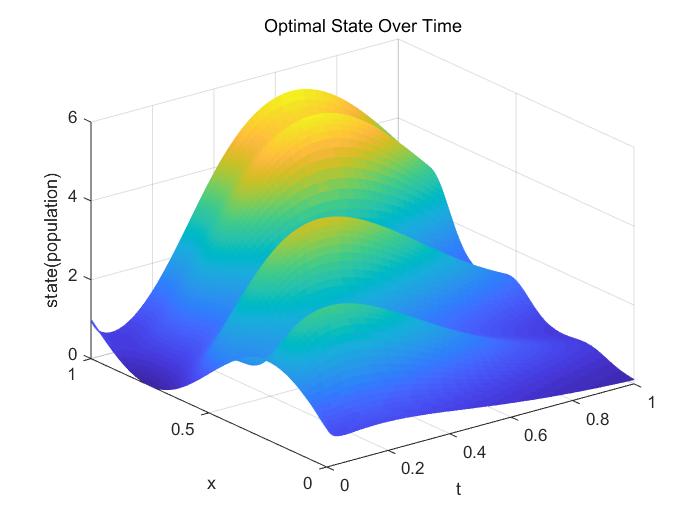}}
\caption{Optimal Control and Corresponding State in 1D Over Time}\label{fig:15}
\end{figure}

\newpage
We also investigate the effect of the cost constant on the optimal control. Setting $\vec{h}$, $u_0$ the same as in \eqref{Eq-h}, \eqref{Eq-u0}, the resource restrictions are the same. Then by changing $B$ from $B=0.05$ to $B=1$, the graph of the optimal control and state are respectively given in Figure \ref{fig:16} and Figure\ref{fig:17}.

\begin{figure}[htb]
\centering
\subfigure{
\includegraphics[width=0.4\textwidth]{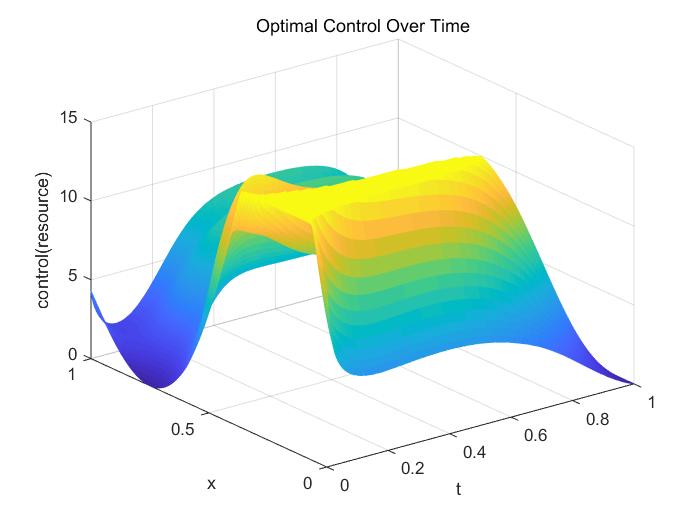}}
\subfigure{
\includegraphics[width=0.4\textwidth]{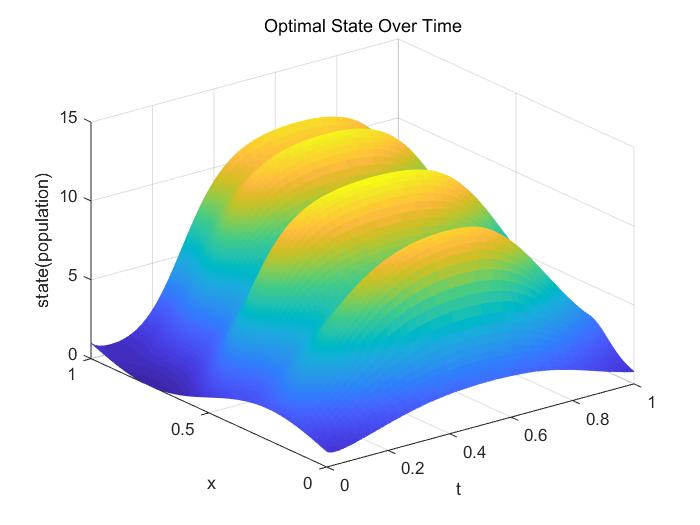}}
\caption{Optimal Control and Corresponding State in 1D Over Time with B=0.05}\label{fig:16}
\end{figure}

\begin{figure}[!htb]
\centering
\subfigure{
\includegraphics[width=0.4\textwidth]{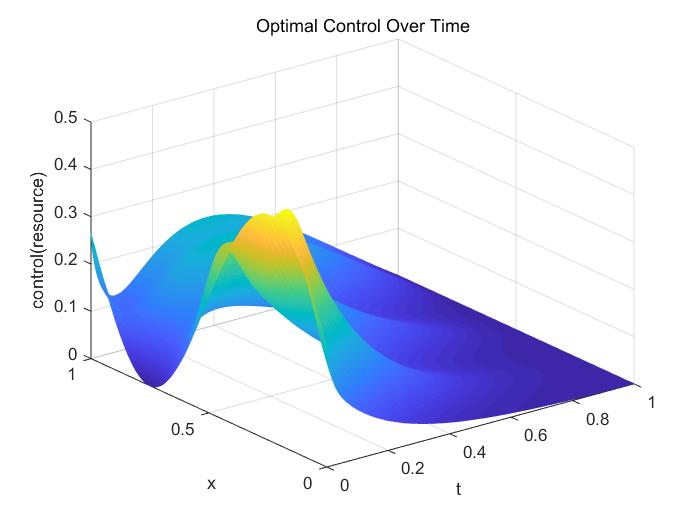}}
\subfigure{
\includegraphics[width=0.4\textwidth]{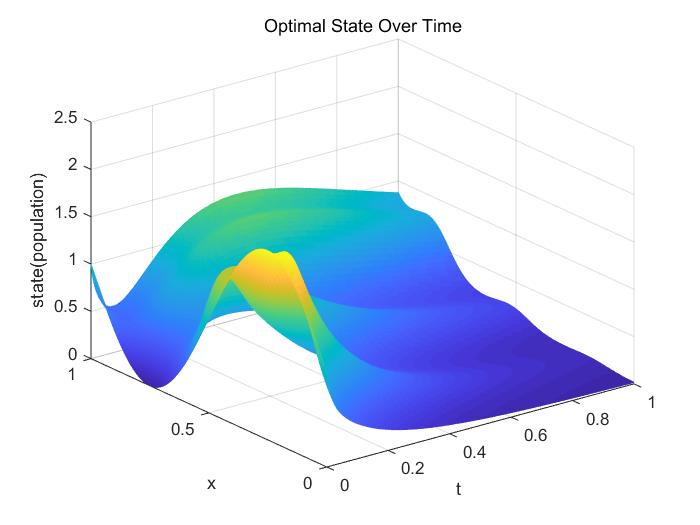}}
\caption{Optimal Control and Corresponding State in 1D Over Time with B=1}\label{fig:17}
\end{figure}

For different $B$ values, we present the resulting optimal controls at time $t=0.2$ in Figure \ref{fig:18} and Figure \ref{fig:19}.

\begin{figure}[htb]
\centering
\subfigure{
\includegraphics[width=0.4\textwidth]{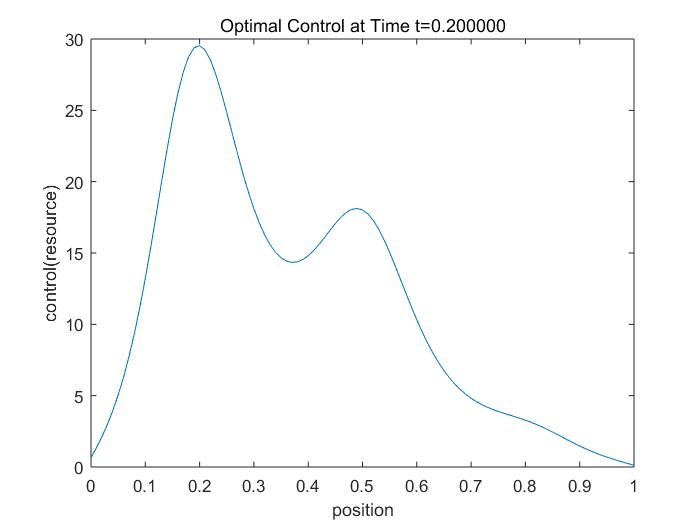}}
\subfigure{
    \includegraphics[width=0.4\textwidth]{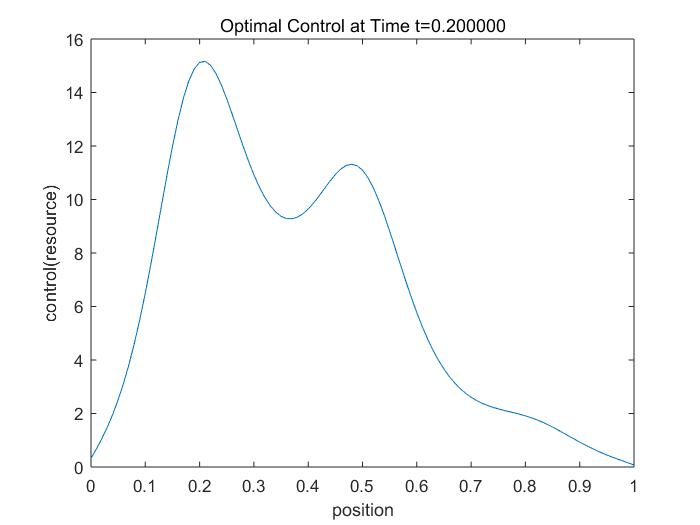}}
\caption{Time Slices of the Optimal Control in 1D for B=0.05 and B=0.1}\label{fig:18}
\end{figure}
\begin{figure}[!htb]
\centering
\subfigure{
\includegraphics[width=0.4\textwidth]{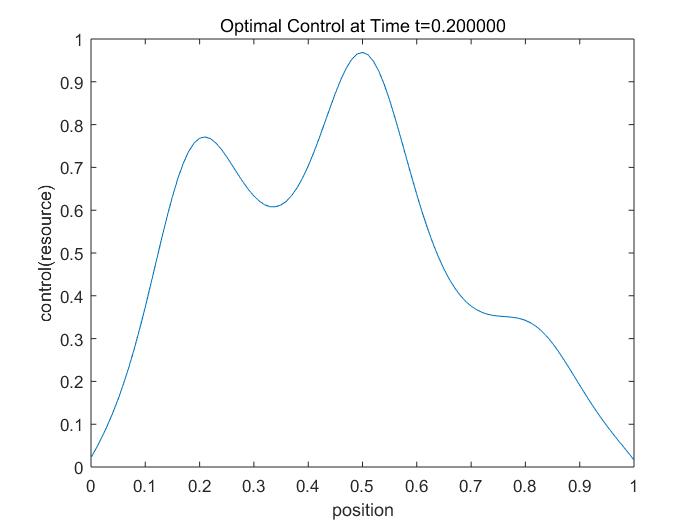}}
\subfigure{
\includegraphics[width=0.4\textwidth]{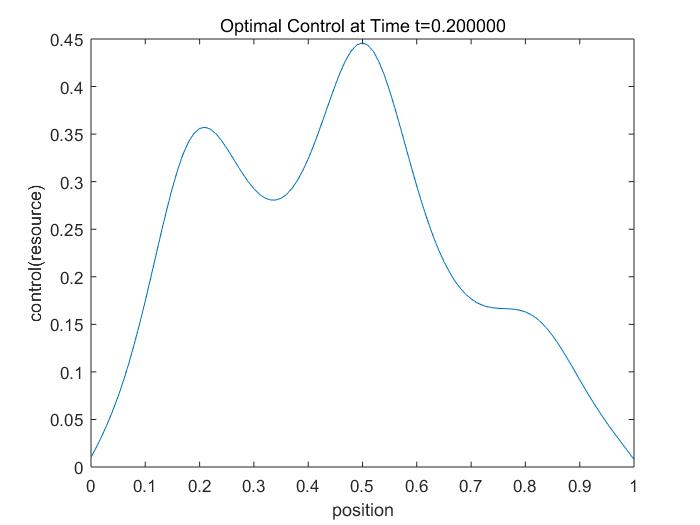}}
\caption{Time Slices of the Optimal Control in 1D for B=0.5 and B=1}\label{fig:19}
\end{figure}

From the graphs we can observe that  the changing of $B$ have little effect on the trends of the optimal control, but changes its scale.

\textbf{The numerical simulation with Dirichlet boundary.}
The Dirichlet boundary condition corresponds to the hostile environment in ecology. In the following, we simulate the system with Dirichlet boundary condition. Let
\begin{equation*}
u_0(x)=
\begin{cases}
\sin (2\pi x-\frac{\pi}{2})& \frac{1}{4}\leq x\leq \frac{3}{4},\\
0& 0\leq x<\frac{1}{4}\text{ or }\frac{3}{4}<x\leq 1.
\end{cases}
\end{equation*}
and $\mu=0.2$, $B=0.1$, and $M=10$. With the above initial condition, the center of the region has positive density distribution and has zero density near the hostile boundary. For simplicity, we first set
\[
\vec{h}(x)=0,
\]
which corresponds to no advection, the species have a stable growth around the center of habitat and the optimal control strategy is allocate resource at the center. The graph of the optimal control and state are given in Figure \ref{fig:20}.
\begin{figure}[!htb]
\centering
\subfigure{
\includegraphics[width=0.4\textwidth]{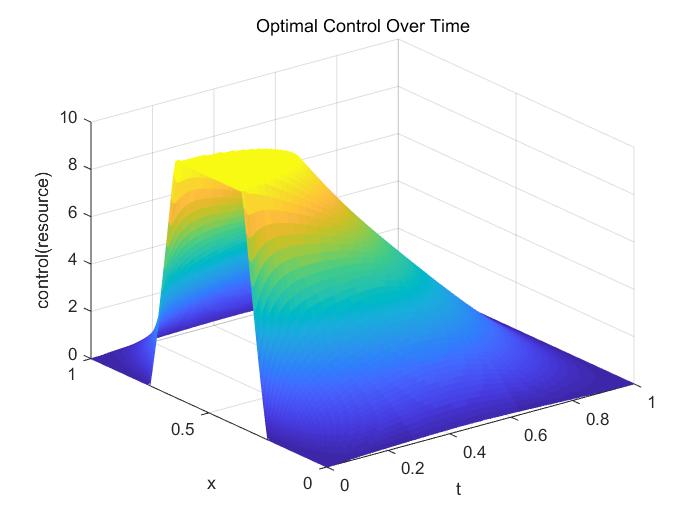}}
\subfigure{
\includegraphics[width=0.4\textwidth]{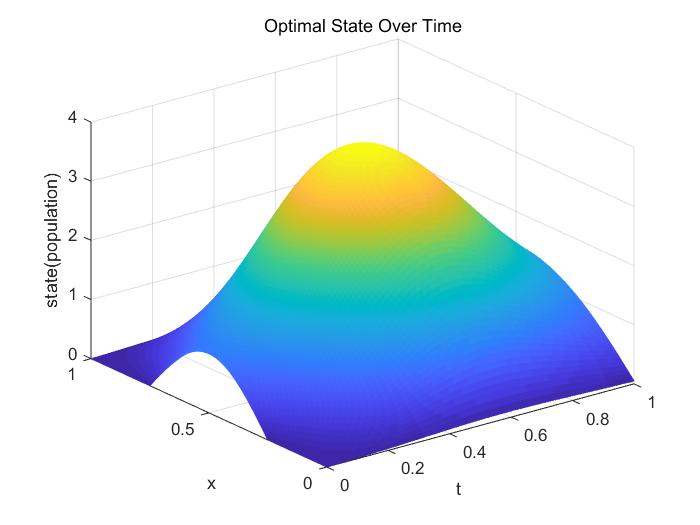}}
\caption{Optimal Control and Corresponding State in 1D Over Time}\label{fig:20}
\end{figure}

Secondly, we set $\vec{h}(x)=3x$, which means that the advection $\vec{h}$ is positive on the habitat. The species will move toward left side and the optimal control strategy should be moving the resource allocation along the advection direction. Because of the advection in one direction to the boundary, the species will die out in the habitat when time is long enough. The graph of the optimal control and state are given in Figure \ref{fig:21}.
\begin{figure}[htb]
\centering
\subfigure{
\includegraphics[width=0.4\textwidth]{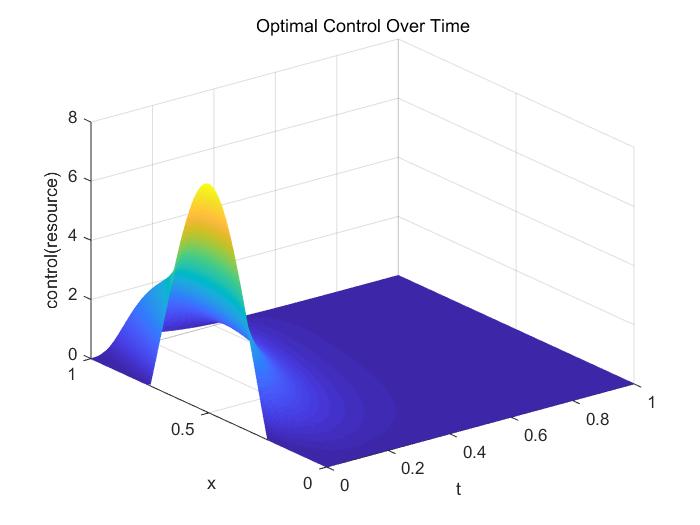}}
\subfigure{
\includegraphics[width=0.4\textwidth]{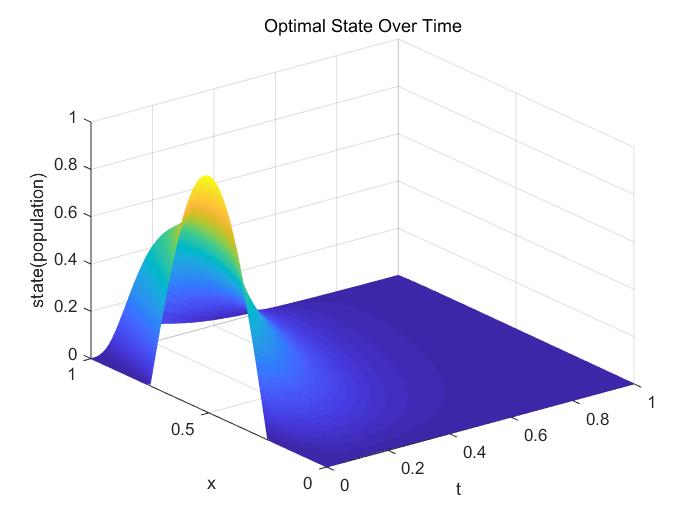}}
\caption{Optimal Control and Corresponding State in 1D Over Time}\label{fig:21}
\end{figure}

Thirdly, we set $\vec{h}(x)=-10x$. The species will move toward right side and the appearances and trends should be opposite to the positive advection case. The graph of the optimal control and state are given in Figure \ref{fig:22}.

\begin{figure}[!htb]
\centering
\subfigure{
\includegraphics[width=0.4\textwidth]{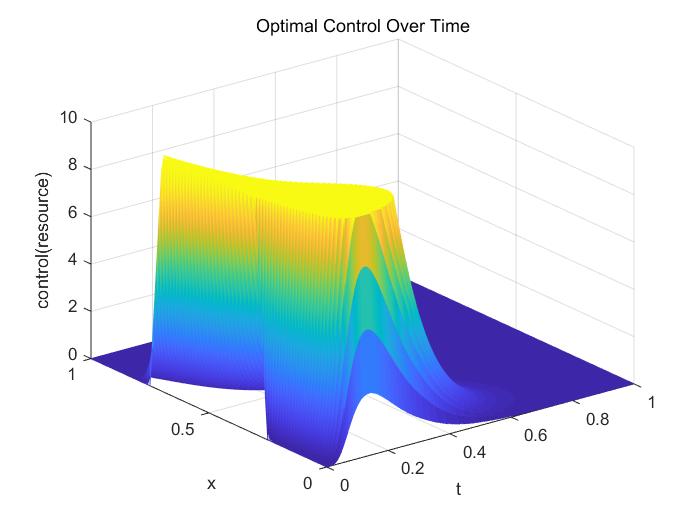}}
\subfigure{
\includegraphics[width=0.4\textwidth]{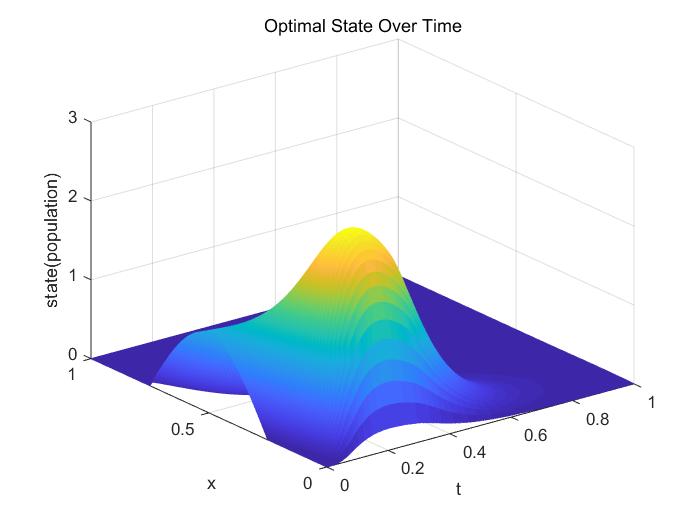}}
\caption{Optimal Control and Corresponding State in 1D Over Time}\label{fig:22}
\end{figure}
The advection direction affect the distribution of density and the corresponding optimal control strategy. Because of the harsh environment in the boundary and under one direction of advection, the species will die out as long as time is big enough.

\section{Acknowledgement} We thank Prof. Yuan Lou for introducing this topic. Lianzhang Bao was partially supported by China Postdoctoral Science Foundation --183816.

\end{document}